\newcommand{\mc}{\mathcal}
\newcommand{\sub}{\subseteq}
\newcommand{\subneq}{\subsetneq}
\newcommand{\ol}{\overline}
\newcommand{\lra}{\Leftrightarrow}
\newcommand{\ra}{\Rightarrow}
\newcommand{\sm}{\setminus}
\newcommand{\astdim}{\ast\op{-dim}}
\newcommand{\tdim}{t\op{-dim}}
\newcommand{\tmax}{t\op{-Max}}
\newcommand{\Max}{\op{Max}}
\newcommand{\tspec}{t\op{-Spec}}
\newcommand{\astspec}{\ast\op{-Spec}}
\newcommand{\astmax}{\ast\op{-Max}}
\newcommand{\op}{\operatorname}
\newcommand{\F}{\mc F}
\newcommand{\cP}{\mc P}
\newcommand{\cS}{\mc S}
\newtheorem{theorem}{Theorem}[section]
\newtheorem{lemma}[theorem]{Lemma}
\newtheorem{prop}[theorem]{Proposition}
\newtheorem{cor}[theorem]{Corollary}
\newtheorem{remark}[theorem]{Remark}
\newtheorem{example}[theorem]{Example}
\theoremstyle{definition}
\begin{document}

	\title{Star stability and star regularity for Mori domains}


\keywords{Stability, Clifford regularity, star operation, Mori domain}

\author{Stefania Gabelli and Giampaolo Picozza
}

\address{Dipartimento di Matematica, Universit\`{a} degli Studi Roma
Tre,
Largo S.  L.  Murialdo,
1, 00146 Roma, Italy}

\email{gabelli@mat.uniroma3.it, picozza@mat.uniroma3.it
}



\begin{abstract} In the last few years, the concepts of stability and Clifford regularity have been fruitfully extended by using star operations. In this paper we study and put in relation these properties for  Noetherian and Mori domains, substantially improving several results present in the literature. \end{abstract}


\maketitle

\section*{Introduction}

In this paper, we study stability and Clifford regularity of Noetherian and Mori domains with respect to star operations, substantially improving several results present in the literature and answering some questions left open.

Recall that a domain $R$ is \emph{stable} if each nonzero ideal $I$ of $R$ is invertible in its endomorphism ring $E(I):=(I:I)$. Stable
domains have been thoroughly investigated by B. Olberding  \cite{O3, O5, O1, O2}.

Stability implies Clifford regularity. A domain is called \emph{Clifford regular} if each nonzero ideal $I$ of $R$ is \emph{von Neumann regular}, that is $I=I^2J$ for some ideal $J$.
This concept was studied for orders in quadratic fields by P. Zanardo and U. Zannier in \cite{ZZ}. Later S. Bazzoni and L. Salce  proved that all valuation domains are Clifford regular \cite{BS} and S. Bazzoni deepened the study of Clifford regularity  in \cite{B1, B2, B3, B4}.

Stability with respect to semistar operations was introduced and studied by the authors of this paper in \cite{GP}.

The first attempt to extend the notion of Clifford regularity in the setting of star operations is due to S. Kabbaj and A. Mimouni, who studied Clifford $t$-regularity for P$v$MDs and Mori domains \cite{KM1, KM2, KM3, KM4}.
Successively F. Halter-Koch, in the language of ideal systems, introduced Clifford $\ast$-regularity for any star operation of finite type \cite{HK} and the authors of this paper continued this study in \cite{GP2}.

Here we mainly  consider the cases where $\ast= d, w, t$.
In fact the most interesting results on star stability and star regularity were obtained in \cite{GP} and \cite {GP2} for star operations spectral and of finite type. In addition, if $\ast$ is spectral and of finite type, $\ast$-regularity implies $\ast=w$ \cite[Corollary 1.7]{GP2}; in particular, if $R$ is Clifford regular, then $w=d$.

Definitions are given in Section 1.

In Section 2, we prove that  $t$-regularity and $t$-stability extend to $t$-compatible Mori overrings (Proposition \ref{overmori}). We also show
that for Mori domains $t$-regularity, $t$-stability and $w$-stability are $t$-local properties, while $w$-regularity is a  $t$-local property for strong Mori domains (Proposition \ref{locMori}).

In Section 3, we prove that Noetherian $\ast$-stable domains have $\ast$-dimension one and strong Mori $t$-stable domains have $t$-dimension one (Corollary \ref{corN2}), extending the well known result that Notherian stable domains are one-dimensional and a result of Kabbaj and Mimouni \cite[Lemma 2.7]{KM4}. This allows us to show that, for strong Mori domains, $w$-regularity and $w$-stability are equivalent (Corollary \ref{onedim1}), while $t$-regularity and $t$-stability are equivalent only in dimension one (Corollary \ref{onedim2}).
 In relation to a question of Kabbaj and Mimouni \cite[Question 2.11(3)]{KM4},
we also prove that the $w$-integral closure of a strong Mori Boole $w$-regular domain is a unique factorization domain (Proposition \ref{Q2}).

In Section 4, we consider the Mori case. We are unable to extend all the results obtained for the $t$-operation in the Noetherian case; however  we show that $w$-stable Mori domains have $t$-dimension one (Proposition \ref{dimwstable}) and that in $t$-dimension one $w$-regularity and $w$-stability are equivalent (Proposition \ref{wmori}). As a consequence, we are able to give an answer to a question posed by Kabbaj and Mimouni in \cite[page 633]{KM1} (Corollary \ref{Q1}): namely we show that a Mori $w$-regular domain has $t$-dimension one if and only if its $w$-integral closure is a Krull domain (Corollary \ref{Q1}).

\section{Preliminaries and notation}

Throughout all the paper, $R$ will be an integral domain and $K$ its field of fractions. If $I$ is a nonzero fractional ideal of $R$, we call $I$ simply an \emph{ideal} and if $I\sub R$ we say that $I$ is an \emph{integral ideal}.

We assume  that the reader is familiar with the properties of star operations (see for example \cite[Sections 32, 34]{g1}). Occasionally we will also consider semistar operations; standard material about semistar
operations can be found in \cite{EFP}. We just recall some basic notions that will be used in the paper.

By $\ol{\mc F}(R)$ we denote the set of nonzero $R$-submodules of $K$ and by $\mc F (R)$  the
semigroup of all ideals of $R$.  A \emph{semistar operation} (respectively, a \emph{star operation}) $\ast$ on $R$ is a map
$\ol{\mc F}(R)\rightarrow \ol{\mc F}(R)$ (respectively, $\mc F(R)\rightarrow \mc F(R)$), $I\mapsto I^{\ast}$, such that the following
conditions hold for each $0\not=a\in K$ and for each $I$, $J\in \ol{\mc F}(R)$ (respectively, $\mc F(R)$):

\begin{itemize}
\item[(i)]  $(aI)^\ast = aI^\ast$ (respectively, $(aI)^\ast = aI^\ast$ and $R=R^\ast$);

\item[(ii)] $I \subseteq I^\ast$, and $I \subseteq J \Rightarrow I^\ast
\subseteq J^\ast$;

\item[(iii)] $I^{\ast\ast} = I^\ast$.
\end{itemize}

If $\ast$ is a semistar operation on $R$ such that $R^\ast=R$, $\ast$ is called a \emph{(semi)star operation}
on $R$ and its restriction to the set of ideals $\mc F(R)$
is a star operation on $R$, still denoted by $\ast$. Conversely, any star operation $\ast$ on $R$ can be extended to a (semi)star operation by setting $I^\ast=K$ for all $I\in \ol{\mc F}(R)\sm \mc F(R)$.

We will be mainly concerned with star operations and (semi)star operations.

If $\ast$ is a semistar operation on $R$ and $D$ is an overring of $R$,
 the restriction of $\ast$ to the set of $D$-submodules of $K$ is a semistar operation on $D$,  here denoted by $\ast_{\vert D}$ or by $\dot{\ast}$ when no confusion arises. When $D^ \ast =D$,  $\dot{\ast}$ is a (semi)star operation on $D$
\cite[Proposition 2.8]{fl01}.
Note that $\dot{\ast}$ shares many properties with $\ast$ (see
for instance \cite[Proposition 3.1]{giampa}); for example,
if $\ast$ is of finite type then $\dot{\ast}$ is of
finite type \cite[Proposition 2.8]{fl01}.

To any semistar operation $\ast$, we can associate a semistar operation of finite type $\ast_f$, defined by
$I^{\ast_f}:=\bigcup \{J^\ast \, \vert \, J \in {\mc F}(R)
\mbox{ finitely generated and } J \subseteq I\}$, and a semistar operation spectral and of finite type $\widetilde{\ast}$, defined by
$I^{\widetilde{\ast}}:= \bigcap_{M\in \ast_f\op{-Max}(R)} IR_M$, for all $I\in \ol{\mc F}(R)$.

If $\ast$ is a (semi)star operation, an ideal $I$ is a \emph{$\ast$-ideal} if $I=I^{\ast}$ and $I$ is called
\emph{$\ast$-finite} (or \emph{of finite type}) if $I^\ast=J^{\ast}=J^{\ast_f}$ for some finitely
generated ideal $J\in \ol{\mc F} (R)$.

A \emph{$\ast$-prime ideal} is a prime ideal which is also a $\ast$-ideal and a \emph{$\ast$-maximal ideal} is a
$\ast$-ideal maximal in the set of proper integral $\ast$-ideals of $R$. We denote by
$\astspec(R)$ (respectively, $\astmax(R)$) the set of $\ast$-prime
(respectively, $\ast$-maximal) ideals of $R$. If $\ast$ is a (semi)star
operation of finite type, by Zorn's lemma each $\ast$-ideal is contained in
a $\ast$-maximal ideal, which is prime. In this case, $R=\bigcap _{M\in \astmax(R)}R_{M}$. We say that $R$
has \emph{$\ast$-finite character} if each nonzero element of $R$ is contained in
at most finitely many $\ast$-maximal ideals.

When $\ast $ is of finite type, a minimal prime of a $\ast$-ideal is
a $\ast$-prime. In particular, any minimal prime over a nonzero
principal ideal (in particular any height-one prime) is a $\ast$-prime, for any (semi)star operation $\ast$ of finite
type.
The \emph{$\ast$-dimension} of $R$ is the supremum of the lengths of the chains of prime ideals
$(0)\sub P_1\sub \dots \sub P_n\sub \dots$, where $P_i\in \astspec(R)$.

The \emph{identity} is a (semi)star operation denoted by $d$, $I^{d}:=I$
for each $I\in  \ol{\mc F}(R)$.
Two nontrivial (semi)star operations which have been
intensively studied in the literature are the $v$-operation and the $t$-operation.
The \emph{$v$-closure} of $I$ is defined by setting $
I^{v}:=(R:(R:I))$, where for any $I, J\in \ol{\mc F}(R)$ we set $(J\colon I):=\{x\in K\,:\, xI
\subseteq J\}$. A $v$-ideal of $R$ is also called a \emph{divisorial ideal}.
The \emph{$t$-operation} is the (semi)star operation of finite type associated to $v$ and is therefore defined by setting
$I^t:= \bigcup \{J^v \, \vert \, J \mbox{ finitely generated and } J \subseteq I\}$.
The (semi)star operation spectral and of finite type associated to $v$ is called the \emph{$w$-operation} and is
defined by setting $I^w:=\bigcap_{M\in \tmax(R)} IR_M$.

If $\ast_1$ and $\ast_2$ are (semi)star operations on $R$, we say
that $\ast_1 \leq \ast_2$ if $I^{\ast_1} \subseteq
I^{\ast_2}$, for each $I \in \ol{\mc F} (R)$. This is equivalent to the
condition that $(I^{\ast_1})^{\ast_2}= (I^{\ast_2})^{\ast_1} =
I^{\ast_2}$.
If $\ast_1 \leq \ast_2$, then $(\ast_1)_f \leq (\ast_2)_f$ and $\widetilde{\ast_1} \leq \widetilde{\ast_2}$.
Also, for each (semi)star
operation $\ast$, we have $d\leq \ast\leq v$ (so that $\ast_f\leq t$ and $\widetilde{\ast}\leq w$)
 and $\widetilde{\ast} \leq \ast_f \leq \ast$ (so that $w\leq t\leq v$).

 For any (semi)star operation $\ast$, the set of $\ast$-ideals of $R$, denoted by $\mc F_\ast(R)$, is a
semigroup under the $\ast$-multiplication, defined by $(I, J)\mapsto (IJ)^\ast$, with
unit $R$. We say that an ideal $I\in \mc F (R)$ is  \emph{$\ast$-invertible} if $I^\ast$ is
invertible in $\mc F_\ast(R)$, equivalently
$(I(R:I))^\ast=R$.
If $\ast$ is a (semi)star operation of finite type, then $I$ is $\ast$-invertible if and only if $I$ is $\ast$-finite and $I^\ast R_M$ is principal for each $M\in \astmax(R)$ \cite[Proposition 2.6]{K}.

For a  semistar operation $\ast$ on $R$, if $I$ is an ideal of $R$ and $E:=E(I^\ast):=(I^\ast:I^\ast)$, it is easy to
see that $E^\ast=E$. Thus the
restriction of $\ast$ to the set of $E$-submodules of $K$ (denoted by $\dot{\ast}:=\ast_{\vert E}$)
is a (semi)star operation on $E$.
 We say that
an ideal $I$ of $R$ is \emph{$\ast$-stable} if $I^\ast$ is
$\dot{\ast}$-invertible in $E$ and that $R$ is
\emph{$\ast$-stable} (respectively, \emph{finitely $\ast$-stable}) if each ideal (respectively, each finitely generated ideal) of $R$ is $\ast$-stable. We also say that $I$ is \emph{strongly $\ast$-stable} if $I^\ast$ is principal in $E$ and that $R$ is \emph{strongly $\ast$-stable} if each ideal is strongly $\ast$-stable.

If $\ast$ is a star operation on $R$, denoting by $\cP(R)$ the group of principal ideals of $R$, the quotient semigroup $\cS_\ast(R):=\F_\ast(R)/\cP(R)$ is called the \emph{$\ast$-Class semigroup} of $R$.

We say that $R$ is \emph{Clifford $\ast$-regular}, or simply \emph{$\ast$-regular}, if the semigroup $\cS_\ast(R)$ is Clifford regular. This means that  each class $[I^\ast]\in \cS_\ast(R)$ is (von Neumann) regular. Note that this is equivalent to saying that each ideal $I^\ast$ is (von Neumann) regular in $\F_\ast(R)$, that is $I^\ast=(I^2J)^\ast$, for some nonzero ideal $J$ of $R$; in this case necessarily $(IJ)^\ast=(I(E(I^\ast):I)))^\ast$, so that $\ast$-stability implies $\ast$-regularity.
 If  $[I^\ast]$ is regular, we say that $I$ is \emph{$\ast$-regular}. If $[I^\ast]$ is idempotent, that is $[I^\ast]=[(I^2)^\ast]$ (equivalently $(I^2)^\ast=xI^\ast$  for a nonzero $x\in K$) we say that $I$ is \emph{Boole $\ast$-regular}  and if each $[I^\ast]$ is idempotent we say that $R$ is \emph{Boole $\ast$-regular}.
Clearly Boole $\ast$-regularity implies Clifford $\ast$-regularity. More precisely, we have the following relations.
\begin{prop} \label{prop1}\cite[ Proposition 1.5]{GP2} Let $I$ be an
ideal of $R$ and, for a star operation $\ast$ on $R$, set $E:=E(I^\ast)$.
\begin{enumerate}
\item[(1)] If $I$  is $\ast$-stable, then  $I$  is $\ast$-regular. Hence a $\ast$-stable domain is Clifford $\ast$-regular.

\item[(2)] If $I$ is $\ast$-regular, then  $I^\ast$ is $v_E$-invertible in $E$ and if, in addition, $I$ is finitely generated, then $I^\ast$ is  $t_E$-invertible in $E$ (where $v_E$ and $t_E$ denote respectively  the $v$-operation and the $t$-operation on $E$).

\item[(3)] $I$ is strongly $\ast$-stable if and only if $I$ is Boole $\ast$-regular and $\ast$-stable.
Hence a strongly $\ast$-stable domain is precisely a Boole $\ast$-regular $\ast$-stable domain.
\end{enumerate}
\end{prop}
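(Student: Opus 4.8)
The plan is to work throughout with $H:=I^{\ast}$, the overring $E:=E(I^{\ast})=(H:H)$ and the restriction $\dot{\ast}:=\ast_{\vert E}$. Recall that $E^{\ast}=E$, so $\dot{\ast}$ is a (semi)star operation on $E$, hence $\dot{\ast}\le v_{E}$; that $EH=H$; and that for an $E$‑submodule $M$ of $K$ one has $M^{\ast}=M^{\dot{\ast}}$, again an $E$‑module. The only computational inputs are the standard identities $(AB)^{\ast}=(A^{\ast}B)^{\ast}=(A^{\ast}B^{\ast})^{\ast}$ (so in particular $(I^{2})^{\ast}=(H^{2})^{\ast}$) and $(aM)^{\dot{\ast}}=aM^{\dot{\ast}}$ for $a\in K$, together with the definitions of $\ast$‑stable, $\ast$‑regular, Boole $\ast$‑regular, strongly $\ast$‑stable, and of $\ast$‑invertibility.

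Parts (1) and (3) reduce to short $\ast$‑multiplications, so I would dispatch them first. For (1), $\ast$‑stability of $I$ says $(H(E:H))^{\dot{\ast}}=E$, i.e.\ $(H(E:H))^{\ast}=E$ since $H(E:H)$ is an $E$‑module; putting $J:=(E:H)$ (a nonzero fractional ideal of $R$) one computes $(I^{2}J)^{\ast}=(H^{2}J)^{\ast}=(H(HJ)^{\ast})^{\ast}=(HE)^{\ast}=H$, so $I$ is $\ast$‑regular. For (3), ``$\Rightarrow$'': if $H=xE$ with $x\in K^{\times}$, then $(H(E:H))^{\dot{\ast}}=(xE\cdot x^{-1}E)^{\dot{\ast}}=E$ and $(I^{2})^{\ast}=(H^{2})^{\ast}=x^{2}E=xH$, so $I$ is $\ast$‑stable and Boole $\ast$‑regular; conversely, feeding $(H^{2})^{\ast}=xH$ (Boole $\ast$‑regularity) into $(H(E:H))^{\ast}=E$ ($\ast$‑stability) gives $xE=x(H(E:H))^{\ast}=((H^{2})^{\ast}(E:H))^{\ast}=(H(H(E:H))^{\ast})^{\ast}=(HE)^{\ast}=H$, so $H$ is principal in $E$. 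In each case the statement about domains follows by applying the above to every ideal.

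The core is part (2). Assuming $I$ is $\ast$‑regular, write $(H^{2}J)^{\ast}=H$ with $J$ a nonzero ideal of $R$; since $H^{2}J$ is an $E$‑module this reads $(H^{2}J)^{\dot{\ast}}=H$, and $H^{2}J\subseteq H$ forces $J\subseteq(E:H)=:H^{-1}$. I would first record that $(H^{2}H^{-1})^{\dot{\ast}}=H$: indeed $H^{2}J\subseteq H^{2}H^{-1}\subseteq HE=H$, so $H=(H^{2}J)^{\dot{\ast}}\subseteq(H^{2}H^{-1})^{\dot{\ast}}\subseteq H^{\dot{\ast}}=H$. The key step is then the identity $(E:HH^{-1})=E$: the inclusion $\supseteq$ is clear because $HH^{-1}\subseteq E$, and for $\subseteq$, if $zHH^{-1}\subseteq E$ then $zH^{2}H^{-1}=(zHH^{-1})H\subseteq EH=H$, hence $zH=z(H^{2}H^{-1})^{\dot{\ast}}=(zH^{2}H^{-1})^{\dot{\ast}}\subseteq H^{\dot{\ast}}=H$, i.e.\ $z\in(H:H)=E$. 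Therefore
\[
(HH^{-1})^{v_{E}}=\bigl(E:(E:HH^{-1})\bigr)=(E:E)=E ,
\]
which is precisely the assertion that $H=I^{\ast}$ is $v_{E}$‑invertible in $E$.

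Finally, if in addition $I$ is finitely generated, then from $IE\subseteq H$ and $I\subseteq IE$ one gets $H=(IE)^{\ast}=(IE)^{\dot{\ast}}$ with $IE$ finitely generated over $E$; this yields $(E:H)=(E:IE)$ and $H^{t_{E}}=H^{v_{E}}=(IE)^{v_{E}}$, so the divisorial ideal $H^{v_{E}}$ is of finite type. A $v$‑invertible ideal whose $v$‑closure is of finite type is $t$‑invertible, so $H=I^{\ast}$ is $t_{E}$‑invertible in $E$. I expect the only genuine obstacle to be in part (2): realizing that the object to invert is the product $HH^{-1}$ rather than $H$ itself, and proving $(E:HH^{-1})=E$, which is the mechanism converting ``von Neumann regular'' into ``$v$‑invertible over the endomorphism ring''; the $t_{E}$‑refinement is then routine bookkeeping with finite‑type closures together with the quoted standard fact about $v$‑finite $v$‑invertible ideals.
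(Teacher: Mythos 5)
Your arguments for (1), for (3), and for the $v_E$-invertibility half of (2) are correct (the paper itself offers no proof to compare with, since it quotes the statement from \cite[Proposition 1.5]{GP2}): the computations $(I^2(E:H))^\ast=H$ from stability, the two implications in (3), the reduction to $J\subseteq (E:H)$, the identity $(E:H(E:H))=E$ and hence $(H(E:H))^{v_E}=E$ are all sound.

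The gap is in the last step of (2). The ``standard fact'' you invoke --- that a $v$-invertible ideal whose $v$-closure is of finite type is $t$-invertible --- is false in general. The correct criterion (Houston--Zafrullah) is that an ideal $A$ is $t$-invertible if and only if it is $v$-invertible and \emph{both} $A^t$ and $(A^{-1})^t$ are of finite type; you only establish finiteness on the $H$-side. Concretely, $t_E$-invertibility of $H$ means that some finitely generated $G\subseteq H(E:H)$ satisfies $G^{v_E}=E$, and (using your $H^{v_E}=(IE)^{v_E}$) this amounts to producing a finitely generated $G_0\subseteq (E:H)$ with $\bigl((E:H):G_0\bigr)=E$; nothing you have proved yields such a $G_0$, since all you know about $(E:H)$ is that it is divisorial with $\bigl((E:H):(E:H)\bigr)=E$. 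That this cannot follow from ``$H$ is $v_E$-invertible and contains a finitely generated ideal with the same $v_E$-closure'' alone is shown by any $v$-domain that is not a P$v$MD (such domains exist, e.g.\ Heinzer--Ohm's essential domain, or completely integrally closed non-P$v$MDs): there a finitely generated ideal is $v$-invertible but not $t$-invertible, which is exactly the configuration your final step claims to be impossible. So the finitely generated case cannot be reduced, as you do, to $v_E$-invertibility plus $v_E$-finiteness of $H$; one has to go back to the regularity relation $H=(H^2(E:H))^{\ast}$ (equivalently $(I^2(E:H))^\ast=H$ with $I$ finitely generated) and extract from it the missing finiteness on the $(E:H)$-side, i.e.\ a finitely generated subideal of $H(E:H)$ whose $v_E$-closure is $E$. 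As it stands, the $t_E$-statement in (2) is unproved.
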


Note that if $\ast_1 \leq \ast_2$, then $\ast_1$-regularity (respectively, $\ast_1$-stability) implies $\ast_2$-regularity (respectively, $\ast_2$-stability).

For our purposes, it will be useful to work in the more general context of $\ast$-Noetherian domains.
Given a star operation $\ast$ on $R$, $R$ is called \emph{$\ast$-Noetherian} if it satisfies the ascending chain condition on $\ast$-ideals, or equivalently, if every $\ast$-ideal is $\ast$-finite. This implies that if $R$ is $\ast$-Noetherian then $\ast$ is of finite type. Note that if $\ast_1 \leq \ast_2$, then $\ast_1$-Noetherianity implies $\ast_2$-Noetherianity.

 Clearly when $\ast = d$ a $\ast$-Noetherian domain is just a Noetherian domain. When $\ast = v$ or $t$,  a $\ast$-Noetherian domain is called a \emph{Mori domain} and when $\ast = w$ it is called a \emph{strong Mori} domain.
We recall that $R$ is a strong Mori domain if and only if it is a  Mori domain such that $R_M$ is Noetherian for each $M\in \tmax(R)$.
For the main properties of Mori domains we refer  to the survey article \cite{Bar}.

For technical reasons, the most interesting results on star stability and star regularity were obtained in \cite{GP} and \cite {GP2} for star operations spectral and of finite type.  In addition, we proved that, for $\ast$ of finite type, either $\ast$-stability or $\widetilde{\ast}$-regularity implies that $\widetilde{\ast}=w$ (\cite[Corollary 1.6]{GP} and \cite[Corollary 1.7]{GP2}); in particular, if $R$ is Clifford regular, then $w=d$. This implies that a Clifford regular strong Mori domain is indeed Noetherian.

We observe that Kabbaj and Mimouni in \cite{KM4} defined a $t$-stable domain as a domain with the property that each $t$-ideal $I$ is stable, that is invertible in its endomorphism ring $E(I)$. However this condition is generally stronger than the usual definition, in fact any Noetherian integrally closed domain $R$ is $t$-stable (since each $t$-ideal $I$ of $R$ is $t$-invertible in $R=(I:I)=:E(I)$), but a $t$-ideal of $R$ need not be invertible \cite[Example 2.9]{KM4}.  However we will show in Corollary \ref{lemmaN1} that the two definitions are equivalent in the Noetherian one-dimensional case. On the other hand, denoting by $t_E$ the $t$-operation on $E:=E(I)$, we have that a $t$-regular $t$-ideal $I$ of a Mori domain is always $t_E$-invertible in $E$. This follows from  Proposition \ref{prop1}(2), since  $I=J^t$, with $J$ finitely generated.


\section{Overrings}

Let $R\sub D$ be an extension of domains. If $\ast$ denotes the $w$- or the $t$-operation on $R$ and $\ast_D$ denotes the respective operation on $D$, we say that the extension is \emph{$\ast$-compatible} if $(ID)^{\ast_D}=(I^\ast D)^{\ast_D}$ for each ideal (equivalently, finitely generated ideal) $I$  of $R$. Also recall that $D$ is \emph{$t$-linked} over $R$ if $(Q\cap R)^t \subneq R$ for each prime $t_D$-ideal of $D$ with $Q\cap R\neq (0)$. By \cite[Proposition 3.10]{Jesse}, the extension $R\sub D$ is $w$-compatible if and only if $D$ is $t$-linked over $R$.

We recall that, viewing $\ast$ as a (semi)star operation on $R$, an overring $D$ is $\ast$-compatible over $R$ if and only if $D=D^\ast$ (see for example \cite[Corollary 2.2]{GP2}). This implies that a $t$-compatible extension is also $w$-compatible (i.e., $t$-linked).

Flat extensions and generalized rings of fractions are examples of $t$-compatible extensions.  In addition, the endomorphism rings of $w$, $t$ or $v$-ideals of $R$ are $t$-linked.

\begin{prop} \label{overmori} Let $R$ be a domain and let $D$ be a $t$-compatible Mori overring of $R$.
 If  $R$ is Clifford $t$-regular (respectively, Boole $t$-regular, $t$-stable, strongly $t$-stable), then $D
 $ is Clifford $t_D$-regular (respectively, Boole $t_D$-regular, $t_D$-stable, strongly $t_D$-stable).  \end{prop}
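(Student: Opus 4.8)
The plan is to reduce the whole statement to ideals of $D$ extended from $R$. Since $D$ is Mori, $t_D$ is of finite type and every $t_D$-ideal of $D$ is $t_D$-finite; writing such an ideal as $(a_1,\dots,a_n)^{v_D}$ with $a_i\in K$ and clearing a common denominator $0\ne c\in R$, one sees that every $t_D$-ideal $J$ of $D$ has the form $J=c^{-1}(ID)^{t_D}$ for some nonzero finitely generated ideal $I\sub R$. Because $c^{-1}D$ is principal, $[J]=[(ID)^{t_D}]$ in $\cS_{t_D}(D)$ and $E(J)=E((ID)^{t_D})$, so it suffices to prove that $(ID)^{t_D}$ is Clifford $t_D$-regular (resp.\ Boole $t_D$-regular, $t_D$-stable, strongly $t_D$-stable) whenever $I$ has the corresponding property in $R$. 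Throughout I will use $t$-compatibility in the form $(AD)^{t_D}=(A^{t}D)^{t_D}$ for every ideal $A$ of $R$, the inequality $t_{\vert D}\le t_D$ (valid because $t_{\vert D}$ is a finite-type $(\text{semi})$star operation on $D$), and the fact that for a $t_D$-ideal $\mathcal J$ of $D$ one has $E(\mathcal J)^{t_D}=E(\mathcal J)$.

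The two regularity cases are direct computations. If $I^{t}=(I^2L)^{t}$ for some nonzero ideal $L$ of $R$, then applying $(\,\cdot\, D)^{t_D}$ and using $t$-compatibility twice gives $(ID)^{t_D}=((ID)^2(LD))^{t_D}$, so $(ID)^{t_D}$ is von Neumann regular in $\F_{t_D}(D)$; and if $(I^2)^{t}=xI^{t}$ with $0\ne x\in K$, the same manipulation yields $((ID)^2)^{t_D}=x(ID)^{t_D}$, so $(ID)^{t_D}$ is Boole $t_D$-regular. These are the easy cases.

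The substantive case is $t$-stability. Assume $I\sub R$ is finitely generated and $t$-stable, put $S:=E(I^{t})=(I^{t}:I^{t})$ and $B:=(S:I^{t})$; since the restriction of $t$ to $S$-submodules of $K$ is exactly the operation $\dot t$ used in the definition of $t$-stability, the hypothesis reads $(I^{t}B)^{t}=S$, and of course $I^{t}B\sub S$. Set $\mathcal I:=(ID)^{t_D}=(I^{t}D)^{t_D}$ and $T:=E(\mathcal I)=(\mathcal I:\mathcal I)$; the goal is that $\mathcal I$ be $(t_D)_{\vert T}$-invertible in $T$, i.e.\ $(\mathcal I(T:\mathcal I))^{t_D}=T$. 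The key identity is
\[
T=(SD)^{t_D}=((I^{t}B)D)^{t_D}.
\]
One half is easy: any endomorphism of $I^{t}$ fixes $I^{t}D$ and hence its $t_D$-closure, so $S\sub T$, whence $SD\sub T$ and $(SD)^{t_D}\sub T^{t_D}=T$; likewise $((I^{t}B)D)^{t_D}\sub(SD)^{t_D}$ because $I^{t}B\sub S$. For the reverse inclusions one passes to finitely generated subideals, where $t$ and $v$ coincide and $t_{\vert D}\le t_D$ can be applied: given $x\in T$, choose a finitely generated $F\sub I^{t}B$ with $1\in F^{v}$ (possible since $1\in S=(I^{t}B)^{t}$); then $xF\sub(xI^{t})B\sub\mathcal I B\sub((BI^{t})D)^{t_D}\sub(SD)^{t_D}$ (using $xI^t\sub x\mathcal I\sub\mathcal I$ and $BI^t\sub S$), so $x\in(xF)^{v}=(xF)^{t}\sub(xFD)^{t_D}\sub(SD)^{t_D}$; running the same argument with a generic element of $S$ in place of $x$ gives $S\sub((I^{t}B)D)^{t_D}$, hence $(SD)^{t_D}\sub((I^{t}B)D)^{t_D}$. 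Granting the identity, $B\mathcal I\sub(SD)^{t_D}=T$ shows $B\sub(T:\mathcal I)$, so
\[
T=((I^{t}B)D)^{t_D}=(\mathcal I B)^{t_D}\sub(\mathcal I(T:\mathcal I))^{t_D}\sub T^{t_D}=T,
\]
and equality holds; thus $(ID)^{t_D}$ is $t_D$-stable. I expect the main obstacle to be precisely this key identity $T=(SD)^{t_D}$: one has to transport a $t$-closure statement over the endomorphism ring $S$ into a $t_D$-closure statement over $D$, and the reduction to finitely generated subideals is what makes the passage through $t_{\vert D}\le t_D$ legitimate.

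Finally, by Proposition \ref{prop1}(3) a strongly $t_D$-stable ideal is exactly a Boole $t_D$-regular and $t_D$-stable one, so the strongly $t_D$-stable case follows by combining the Boole case with the stability case.
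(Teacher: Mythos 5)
Your proposal is correct, and its skeleton coincides with the paper's: both arguments exploit the Mori hypothesis to write every $t_D$-ideal of $D$ as the $t_D$-closure of (a multiple of) a finitely generated ideal of $R$, and then transfer the relevant property of that ideal from $R$ to $D$ along the $t$-compatible extension. The difference is in how the transfer is handled. The paper simply invokes \cite[Lemma 2.4]{GP2} at that point, so its proof is two lines but outsources all the substance to the companion paper; you instead prove the transfer directly. Your computations for Clifford and Boole $t$-regularity (apply $(\,\cdot\,D)^{t_D}$ and use $(AD)^{t_D}=(A^tD)^{t_D}$ twice) are exactly right, and your treatment of $t$-stability, resting on the identity $E\bigl((ID)^{t_D}\bigr)=(E(I^t)D)^{t_D}=\bigl((I^tB)D\bigr)^{t_D}$ with $B=(E(I^t):I^t)$, proved by passing to finitely generated subideals where $t=v$ and using $t_{\vert D}\le t_D$ (legitimate since $D^t=D$ by $t$-compatibility), is a sound self-contained reproof of the needed instance of the cited lemma; the strong-stability case via Proposition \ref{prop1}(3) also matches the paper's conventions. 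What your route buys is independence from \cite{GP2} (only Proposition \ref{prop1}, $t$-compatibility, and $E^{\ast}=E$ are used), and in fact your stability argument never uses that $I$ is finitely generated, so it gives the transfer for arbitrary ideals of $R$; what the paper's route buys is brevity and a statement (the cited lemma) applicable verbatim to all four properties at once. Only cosmetic remarks: since the paper's ``ideals'' are already fractional, clearing the denominator $c$ is not strictly needed, and the equality $\bigl((I^tB)D\bigr)^{t_D}=(\mathcal I B)^{t_D}$ in your final display deserves the one-line justification that $\mathcal I B$ is a $D$-module squeezed between $(I^tB)D$ and $\bigl((I^tB)D\bigr)^{t_D}$, which you have effectively already established.
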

 \begin{proof}
  If $D$ is Mori each ideal of $D$ is $t_D$-finite, that is of type $I^{t_D}$ with $I$ finitely generated.  Since $R$ and $D$ have the same field of fractions, $I$ is an ideal of $R$.  By \cite [Lemma 2.4]{GP2}, it follows that if  $I$ is Clifford $t$-regular (respectively, Boole $t$-regular, $t$-stable, strongly $t$-stable) as an ideal of $R$, then it is Clifford $t_D$-regular (respectively, Boole $t_D$-regular, $t_D$-stable, strongly $t_D$-stable) as an ideal of $D$.  \end{proof}

\begin{cor} \label{quotmori}  Let $R$ be a Mori domain and let $D$ be a generalized ring of fractions of $R$.
 If  $R$ is Clifford $t$-regular (respectively, Boole $t$-regular, $t$-stable, strongly $t$-stable), then $D
 $ is Clifford $t_D$-regular (respectively, Boole $t_D$-regular, $t_D$-stable, strongly $t_D$-stable).
\end{cor}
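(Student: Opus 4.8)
The plan is to derive this at once from Proposition \ref{overmori}. To apply that proposition with $D$ in place of the overring appearing there, I must check that $D$ is an overring of $R$ inside $K$, that the extension $R\sub D$ is $t$-compatible, and that $D$ is a Mori domain.

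The first two are already available. A generalized ring of fractions $D$ of $R$ is an overring of $R$ with the same field of fractions, and the paragraph preceding Proposition \ref{overmori} records that generalized rings of fractions furnish $t$-compatible extensions; thus $(ID)^{t_D}=(I^{t}D)^{t_D}$ for every (finitely generated) ideal $I$ of $R$.

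The substantive point is that $D$ is again Mori, and this is where I expect the only real obstacle to lie, since an arbitrary overring of a Mori domain need not be Mori. One can invoke the known fact that generalized rings of fractions of Mori domains are Mori (see \cite{Bar}); alternatively, one argues directly. Every integral ideal $J$ of $D$ is extended from $R$, say $J=(J\cap R)D$ (a standard property of generalized rings of fractions). Since $R$ is Mori there is a finitely generated ideal $H\sub J\cap R$ with $(J\cap R)^{t}=H^{t}$, and $t$-compatibility then yields $J^{t_D}=((J\cap R)D)^{t_D}=((J\cap R)^{t}D)^{t_D}=(H^{t}D)^{t_D}=(HD)^{t_D}$, with $HD$ finitely generated over $D$. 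Hence every $t_D$-ideal of $D$ is $t_D$-finite, i.e.\ $D$ is Mori.

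With $D$ exhibited as a $t$-compatible Mori overring of $R$, Proposition \ref{overmori} applies verbatim and transfers each of the four properties --- Clifford $t$-regularity, Boole $t$-regularity, $t$-stability, strong $t$-stability --- from $R$ to $D$, now with $t_D$ in place of $t$, which is exactly the assertion. Apart from the Mori-ness of $D$, the argument is bookkeeping and citation.
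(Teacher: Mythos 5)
Your main route is exactly the paper's: the paper proves this corollary in one line, observing that a generalized ring of fractions of a Mori domain is a $t$-compatible Mori overring and then invoking Proposition \ref{overmori}, so citing the known fact (e.g.\ from \cite{Bar}) that $D$ is again Mori is precisely what the paper does, and your verification of $t$-compatibility matches the remark preceding Proposition \ref{overmori}. One caution about your ``alternative'' direct argument for Mori-ness: the claim that every integral ideal $J$ of $D$ satisfies $J=(J\cap R)D$ is a property of ordinary rings of fractions, not a standard property of generalized rings of fractions. For $x\in J$ one only gets $xI\sub J\cap R$ for some ideal $I$ in the multiplicative system defining $D$, i.e.\ $x\in\big((J\cap R):I\big)$, and this need not lie in $(J\cap R)D$; so that sketch has a genuine gap, and the transfer of the Mori property should instead be taken from the literature (where it is proved via the behaviour of divisorial ideals, not by extending arbitrary ideals). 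With the cited fact in place, your reduction to Proposition \ref{overmori} is the paper's argument verbatim.
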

\begin{proof}
It follows from Proposition \ref{overmori}, because a generalized ring of fractions of a Mori domain is a $t$-compatible Mori domain.
\end{proof}

\begin{remark} \rm A $t$-linked overring of a Mori domain need not be Mori. In fact, if $R$ is a Mori domain and each $t$-linked overring of $R$ is Mori, then $R$ has $t$-dimension one \cite[Proposition 2.20]{DHLZ}. The converse holds for strong Mori domains \cite[Theorem 3.4]{FC}.

However a $t$-compatible  fractional overring of a Mori  domain $R$ is a Mori  domain. In fact, by the next Proposition \ref{overnoeth}, any fractional overring $D$ of $R$ is $\dot{t}$-Noetherian. If, in addition, $D$ is $t$-compatible (i.e., $D^t=D$), $\dot{t}$ is a (semi)star operation on $D$. Hence $\dot{t}\leq t_D$ and so $D$ is also $t_D$-Noetherian, that is Mori. We also observe that
each $t$-compatible fractional overring $D$ of a domain $R$ is of type $E(I^t):= (I^t:I^t)$, for some ideal $I$ of $R$. To see this, note that if $D=D^t$ is an ideal of $R$, we can write $D=x^{-1}I^t$ for some (integral) ideal $I$ of $R$. Whence $D=(D:D)=(I^t:I^t)$.
\end{remark}

We recall that for any domain $R$ and $\ast=d, w, t$, if $R$ is Clifford $\ast$-regular (respectively, Boole $\ast$-regular, $\ast$-stable, strongly $\ast$-stable), each ring of fractions of $R$ has the same property \cite[Corollary 2.6(a)]{GP2}. We now prove a converse for Mori domains.

The following lemma is an easy calculation and follows from the fact that each ideal of a Mori domain is $t$-finite.

\begin{lemma} \label{lemma3} Let $R$ be a Mori domain and let $R_S$ be a ring of fractions of $R$. Then, denoting by $t_S$ the $t$-operation on $R_S$, for any two ideals $J$ and $I$ of $R$, we have
$(J^t:I^t)R_S=(J^tR_S:I^tR_S)$ and $I^tR_S=(IR_S)^{t_S}$.
 \end{lemma}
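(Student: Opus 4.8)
The plan is to prove both identities by reducing to the finitely generated case, where the $t$- and $v$-operations coincide with the known localization behavior of the $v$-operation. First I would recall that since $R$ is Mori, every ideal of $R$ is $t$-finite; in particular, both $I$ and $J$ can be written as $I^t = A^v = A^t$ and $J^t = B^v = B^t$ for suitable finitely generated ideals $A \subseteq I$, $B \subseteq J$. Thus it suffices to establish the two equalities when $I$ and $J$ are themselves finitely generated.

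For the second identity, $I^t R_S = (IR_S)^{t_S}$, I would argue as follows: with $I$ finitely generated, $I^t = I^v = (R:(R:I))$. The standard fact that $v$ (equivalently $t$, on finitely generated ideals) commutes with localization---$(R:I)R_S = (R_S : IR_S)$ for $I$ finitely generated, and hence $(R:(R:I))R_S = (R_S:(R_S:IR_S))$---gives $I^v R_S = (IR_S)^{v_S}$. Since $IR_S$ is finitely generated over $R_S$, $(IR_S)^{v_S} = (IR_S)^{t_S}$, and the claim follows. For the first identity, I would use that $(J:I)R_S = (JR_S:IR_S)$ when $I$ is finitely generated (again a routine fact about fractional ideals and localization), apply it with $J$ and $I$ replaced by $J^t$ and $I^t$: writing $J^t = B^t$ and $I^t = A^t$ with $A, B$ finitely generated, we have $(J^t : I^t) = (B^t : A^t)$, and since $A^t$ is... here one must be slightly careful because $A^t$ need not be finitely generated. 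Instead I would observe $(B^t : A^t) = (B^t : A)$ (because $A^t$ is the smallest $t$-ideal containing $A$ and $B^t$ is a $t$-ideal, so $xA \subseteq B^t \iff xA^t \subseteq B^t$), then localize using finite generation of $A$: $(B^t:A)R_S = (B^tR_S : AR_S)$. Finally, by the second identity $B^tR_S = (BR_S)^{t_S} = J^t R_S$ and $AR_S$ has the same $t_S$-closure as $A^tR_S = I^tR_S$, so $(B^tR_S:AR_S) = (J^tR_S : I^tR_S)$, completing the argument.

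The main obstacle I anticipate is bookkeeping around the fact that while $I, J$ themselves are $t$-finite, the ideals $I^t, J^t$ appearing in the statement are typically not finitely generated, so one cannot naively invoke the "$v$ commutes with localization for finitely generated ideals" lemma directly on them. The fix, as sketched above, is to systematically replace every colon $(\cdot : I^t)$ by the colon against a finitely generated ideal having the same $t$-closure, using the elementary identity $(\mathfrak{b}:\mathfrak{a}^t) = (\mathfrak{b}:\mathfrak{a})$ valid whenever $\mathfrak{b}$ is a $t$-ideal. Once this reduction is in place, everything is a direct computation with fractional ideals under localization, justifying the remark in the text that the lemma "is an easy calculation."
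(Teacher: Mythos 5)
Your overall strategy (reduce everything to finitely generated ideals via $t$-finiteness, which is exactly what the paper's one-line remark intends) is the right one, and your treatment of the first identity is correct: the replacement $(B^t:A^t)=(B^t:A)$, the localization of a colon whose denominator $A$ is finitely generated, and the final comparison using the already-established second identity all go through. The gap is in the second identity. From $(R:I)R_S=(R_S:IR_S)$ for $I$ finitely generated you conclude ``hence $(R:(R:I))R_S=(R_S:(R_S:IR_S))$'', i.e.\ $I^vR_S=(IR_S)^{v_S}$. This is not a formal consequence, and it is not a standard fact valid in arbitrary domains: the outer colon has denominator $(R:I)$, which is in general not finitely generated, and colons only commute with localization when the denominator is finitely generated. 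In a general domain one gets only $I^vR_S\subseteq (IR_S)^{v_S}$ for finitely generated $I$; the reverse inclusion can fail (this is precisely the well-known failure of the $v$-operation to localize). So the step as written is exactly the kind of ``naive invocation'' you warn against in your last paragraph, but your stated fix only targets colons against $I^t$ and $J^t$, not the dual ideal $(R:I)$ where the real issue sits.

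The repair is one more application of the Mori hypothesis, applied to the dual ideal: since $R$ is Mori, the divisorial fractional ideal $(R:I)$ is $v$-finite, say $(R:I)=C^v=C^t$ with $C$ finitely generated. Then $(R:(R:I))=(R:C^t)=(R:C)$ because $R$ is a $v$-ideal, so $(R:(R:I))R_S=(R:C)R_S=(R_S:CR_S)$; on the other side, $(R_S:(R_S:IR_S))=(R_S:(R:I)R_S)=(R_S:C^tR_S)$, and since $CR_S\subseteq C^tR_S\subseteq (CR_S)^{t_S}$ (the last containment because $t$ has finite type, via the easy inclusion $D^vR_S\subseteq (DR_S)^{v_S}$ for finitely generated $D$) and $R_S$ is a $v_S$-ideal of $R_S$, the colons of all three modules into $R_S$ coincide. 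This closes the gap; with it your argument is complete and is indeed the ``easy calculation'' from $t$-finiteness that the paper alludes to without writing out.
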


\begin{prop} \label{locMori} Let $R$ be a Mori domain. Then:
\begin{itemize}
\item[(1)]  $R$ is Clifford $t$-regular (respectively, $t$-stable, $w$-stable) if and only if $R_M$ is Clifford $t_M$-regular (respectively, $t_M$-stable, stable) for each $M\in \tmax(R)$;
\item[(2)]  If $R$ is Clifford $w$-regular, then $R_M$ is Clifford regular, for each $M\in \tmax(R)$.
If, in addition, $R$ is strong Mori, $R$ is Clifford $w$-regular if and only if $R_M$ is Clifford regular for each $M\in \tmax(R)$.
\end{itemize}
\end{prop}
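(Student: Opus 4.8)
The plan is to reduce each of the four equivalences to a single-ideal statement localized at the $t$-maximal ideals, and then glue. The two tools that make this work are: Lemma \ref{lemma3}, which says that for a Mori domain the $t$-operation commutes with localization ($I^tR_M=(IR_M)^{t_M}$ and $(J^t:I^t)R_M=(J^tR_M:I^tR_M)$ for $M\in\tmax(R)$); and the identity $A^wR_M=AR_M$, valid for every $R$-submodule $A$ of $K$ and every $M\in\tmax(R)$, since $A^w=\bigcap_{N\in\tmax(R)}AR_N\subseteq AR_M$. I will also use that, for $M\in\tmax(R)$, the ideal $MR_M$ is a $t_M$-ideal (apply Lemma \ref{lemma3} to $M=M^t$), hence the unique $t_M$-maximal ideal of the local domain $R_M$, so that $w_M$ is the identity on $R_M$; and that $E(I^\ast)$ is always a $\ast$-ideal, with $E(I^t)R_M=((IR_M)^{t_M}:(IR_M)^{t_M})=E((IR_M)^{t_M})$ by Lemma \ref{lemma3}. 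Since every fractional ideal of $R_M$ is extended from $R$, in each case ``$R$ has the property'' will be equivalent to ``each ideal $I$ of $R$ has it locally at every $M\in\tmax(R)$''; the content is this per-ideal equivalence.

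The ``only if'' directions are immediate from \cite[Corollary 2.6(a)]{GP2}, which says that a ring of fractions of a Clifford $t$-regular (resp. $t$-stable, $w$-stable, Clifford $w$-regular) domain inherits the corresponding property; since $w_M$ is the identity on $R_M$, these read as ``$R_M$ is Clifford $t_M$-regular (resp. $t_M$-stable, stable, Clifford regular)''. This also proves the first assertion of (2), for an arbitrary Mori domain.

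For the ``if'' direction of the $t$-statements, I rewrite the conditions so that they visibly localize. An ideal $I$ is $t$-regular iff $(I^2(I^t:I^2))^t=I^t$ (if $(I^2J)^t=I^t$, then $J\subseteq(I^t:I^2)$, so one may take $J=(I^t:I^2)$), and $t$-stable iff $(I^t(E:I^t))^t=E$ where $E:=E(I^t)$; in each equation both sides are $t$-ideals, hence $w$-ideals, hence the intersections of their localizations at $\tmax(R)$. Using Lemma \ref{lemma3} on the ideals and colon ideals occurring (e.g. $(I^t:I^2)R_M=((IR_M)^{t_M}:(IR_M)^2)$ and $ER_M=E((IR_M)^{t_M})$), the localization at $M$ of each equation is exactly the corresponding condition for $IR_M$ in $R_M$. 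Hence if $IR_M$ is $t_M$-regular (resp. $t_M$-stable) for all $M$, the two $w$-ideals agree locally, hence globally, so $I$ is $t$-regular (resp. $t$-stable); the converse is the localization itself. The strong Mori case of (2) is the same with $w$ in place of $t$: since $R$ is $w$-Noetherian, $I^w$ is $w$-finite, so we replace $I$ by a finitely generated $J$ with $J^w=I^w$ (whence $JR_M=IR_M$), rewrite $w$-regularity as $(J^2(J^w:J^2))^w=J^w$, and localize at $M$ using $A^wR_M=AR_M$ together with the fact that $(J^w:J^2)$ localizes since $J^2$ is finitely generated; the localized equation holds because $R_M$ is Noetherian (strong Mori) and Clifford regular. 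This is precisely where the strong Mori hypothesis is used: in a general Mori domain $I^w$ need not be $w$-finite, so the reduction to a finitely generated ideal is unavailable.

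The main obstacle is the ``if'' direction of the $w$-stable statement in (1), where $R$ is only Mori. Here $I$ cannot be replaced by a finitely generated ideal, and $E:=E(I^w)$ does not localize as cleanly as in the $t$-case: one only gets $E=\bigcap_{M\in\tmax(R)}E(IR_M)$, while $E(IR_M)$ may strictly contain $ER_M$. The way around this is to use the two remaining structural features of Mori domains: every colon ideal that occurs (e.g. $(E:I^w)=(E:I)$) is divisorial, hence $v$-finite, so it does localize via Lemma \ref{lemma3}; and a Mori domain has $t$-finite character, which by a prime-avoidance argument allows one to multiply an element of $E(IR_M)$ by a suitable unit of $R_M$ so that it lands in $\bigcap_{N}E(IR_N)=E$, using that each $R_N$ is stable. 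Equivalently, one may invoke the characterization in \cite{GP} of $\widetilde{\ast}$-stability as a $\widetilde{\ast}$-local property for domains of $\widetilde{\ast}$-finite character, applied to $\ast=v$ (so $\widetilde{\ast}=w$), since Mori domains have $t$-finite character. Granting this, $I$ is $w$-stable iff $IR_M$ is stable ($=w_M$-stable) for all $M\in\tmax(R)$, and one concludes as above.
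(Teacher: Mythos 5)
Your proof is correct and follows essentially the same route as the paper: localize the defining equalities of ($t$- or $w$-)regularity and stability via Lemma \ref{lemma3} (resp.\ $A^wR_M=AR_M$ and $w$-finiteness in the strong Mori case), glue using that both sides are $w$-ideals, handle the converse directions by \cite[Corollary 2.6(a)]{GP2} together with $w_M=d$ on $R_M$, and settle $w$-stability by the $t$-finite character of Mori domains via the $\widetilde{\ast}$-local characterization of \cite{GP} (the paper's \cite[Corollary 1.10]{GP}). Your extra sketched ``prime-avoidance'' argument for the $w$-stable direction is vaguer than the rest, but since you ultimately invoke the same citation as the paper, nothing essential is missing.
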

\begin{proof} (1) Assume that $R_M$ is $t_M$-stable for each $M\in \tmax(R)$. Then, if $I$ is a nonzero ideal of $R$, by applying Lemma \ref{lemma3}, we get
$(I(I^t:I^2))^tR_M= (I(I^t:I^2)R_M)^{t_M}= (IR_M((IR_M)^{t_M}:(IR_M)^2))^{t_M}=((IR_M)^{t_M}:(IR_M)^{t_M}) =(I^t:I^t)R_M$.
Whence $(I(I:I^2))^t=(I^t:I^t)$ and $R$ is $t$-stable.

In the same way, if $R_M$ is $t_M$-regular, we get $I^tR_M=(IR_M)^{t_M}=((IR_M)^2((IR_M)^{t_M}:(IR_M)^2))^{t_M}=((I^t)^2R_M(I^tR_M:(I^2)^tR_M))^{t_M}=(I^2(I^t:I^2))^tR_M$, for each  $M\in \tmax(R)$ and so $I^t=(I^2(I^t:I^2))^t$.

Conversely, if $R$ is $t$-regular (respectively, $t$-stable), $R_M$ is $t_M$-regular (respectively, $t_M$-stable) by  Corollary \ref{quotmori}.

The result  for  $w$-stability follows from
\cite[Corollary 1.10]{GP}, since a Mori domain has the $t$-finite character.

(2) If $R$ is Clifford $w$-regular, $R_M$ has the same property by \cite[Corollary 2.6]{GP2}(a). But since $R$ is Mori, $MR_M$ is a $t_M$-ideal (Lemma \ref{lemma3}) and so the $w$-operation is the identity on $R_M$.

If $R$ is strong Mori and $R_M$ is Clifford regular for each $M\in \tmax(R)$, to show that $R$ is Clifford $w$-regular we can proceed as in (1), recalling that if $R$ is strong Mori each $w$-ideal is $w$-finite and so $(I^w:I^2)R_M=(IR_M:I^2)$, for each ideal $I$ and $M\in \tmax(R)$. \end{proof}

We now show that a Mori Clifford $w$-regular domain is $t$-stable, so that for Mori domains

\centerline{$w$-stable $\ra$ $w$-regular $\ra$ $t$-stable $\ra$ $t$-regular.}

\smallskip
The following is an example of a Mori $t$-stable domain that is not ($w$)-stable (this answers the question in \cite[Remark 1.7(1)]{GP}).
We will see in Section 2 that a Noetherian Clifford $t$-regular domain of $t$-dimension strictly greater than one cannot be $w$-stable (equivalently $w$-regular).

\begin{example} \rm Let $R$ be an integrally closed pseudo-valuation domain arising from a pullback diagram of type
$$  \begin{CD}
        R   @>>>    k\\
        @VVV        @VVV    \\
        V  @>\varphi>>  K:= \frac VM\\
        \end{CD}$$
           where $V:=(V, M)$ is a DVR and $R \neq V$. Note that $R$ is Mori one-dimensional \cite[Theorem 2.2]{Bar}, so that $d=w$ and $t=v$ on all the ideals of $R$.

      Since an integrally closed stable domain is Pr\"ufer \cite[Proposition 2.1]{Rush}, then $R$ is not stable. However $R$ is (strongly) $t$-stable; in fact, for each divisorial non-principal ideal $J^v$ of $R$, we have that $J^v=JV$ is an ideal of $V$ \cite[Proposition 2.14]{HH} and so it is principal in $V=(J^v:J^v)$.
\end{example}

\begin{lemma} \label{lemma4} In a Mori finitely stable domain $R$ every $t$-ideal is stable. In particular $R$ is $t$-stable.
\end{lemma}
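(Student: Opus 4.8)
To show that in a Mori finitely stable domain $R$ every $t$-ideal is stable — i.e., invertible in its endomorphism ring — I would start from the fact that every $t$-ideal $I$ of a Mori domain is $t$-finite, so $I = J^v$ with $J$ finitely generated (here $t = v$ on finitely generated ideals). The strategy is: (a) reduce the stability of $I = J^v$ to the stability of the finitely generated ideal $J$, and (b) use the finite-stability hypothesis on $J$ together with the Mori property to upgrade ``$J$ is $t$-stable'' (which is what finite stability literally gives, via Proposition \ref{prop1} and the machinery in Section 1) to ``$J^v$ is stable in $(J^v : J^v)$'' in the strong, non-star sense.

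**Key steps.** First I would set $E := E(I) = (I : I) = (J^v : J^v)$ and observe that, since $J$ is finitely generated and $R$ is finitely stable, $J$ is stable, meaning $J$ is invertible in $E(J) = (J : J)$. The first real task is to compare $E(J)$ and $E(J^v)$: I claim $(J:J) = (J^v:J^v)$. One inclusion is automatic; for the other, if $x(J^v) \subseteq J^v$ I would use that $J$ is finitely generated to pass to the $v$-closure, using $(J^v : J^v) = ((J:J))$ type identities — this is where the Mori hypothesis and the fact that $v$ is well-behaved on finitely generated ideals enter. Next, from $J$ invertible in $E := (J:J)$, we get $J(E:_E J) = E$ where $(E:_E J) = (E : J)$. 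Then I would apply the $v$-operation (or rather the extension $\dot v$ of $v$ to $E$, which is a (semi)star operation on $E$ since $E = E^v$ is a $t$-compatible fractional overring of the Mori domain $R$, hence itself Mori by the Remark following Proposition \ref{overmori}); this gives $J^v$ invertible in $E$ with respect to $\dot v$, and hence $J^v R_N$ principal at each $\dot v$-maximal $N$ of $E$. Finally — and this is the crux — I would use that $E$ is Mori and one-dimensional-ish behavior is NOT yet available, so instead I would argue directly: $J$ invertible in $E$ means $J E$ is an invertible ideal of $E$, so $(JE)^v = JE$ (invertible ideals are divisorial), and $J^v \subseteq (JE)^v = JE \subseteq J^v$, giving $J^v = JE$ is invertible in $E$. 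That identity $J^v = JE$ when $J$ is invertible in $E$ is the key computation.

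**Main obstacle.** The delicate point is the passage from ``$t$-stable'' to ``stable'': $t$-stability of $J$ only asserts $J^v$ is $\dot v$-invertible in $E$, i.e., $(J^v (E : J^v))^{\dot v} = E$, with a star closure that one must remove. The clean way around this is the observation above that finite stability gives something stronger than $t$-stability for finitely generated ideals — namely honest invertibility of $J$ in $(J:J)$ — and then that invertible ideals are automatically divisorial, so the closure evaporates: $J^v = (J(J:J)^{-1}\cdot(J:J))^v$ collapses because $J(J:J)$ is already a $v$-ideal of $E$. I expect the bookkeeping identity $(J:J) = (J^v:J^v)$ for finitely generated $J$ over a Mori domain, and the claim $J^v = J\cdot(J:J)$, to absorb most of the work; once these are in hand, ``$I = J^v$ is invertible in $E(I)$'' is immediate, and ``$R$ is $t$-stable'' follows since every $t$-ideal has this form and $\dot t$-invertibility in $E$ is weaker than honest invertibility in $E$.
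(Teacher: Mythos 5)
Your skeleton is the right one, and it is in fact the paper's: write $I=J^t=J^v$ with $J$ finitely generated, use finite stability to make $J$ invertible in $(J:J)$, pass to $E:=E(I)=(I:I)$ via the inclusion $(J:J)\subseteq E$, and identify $I$ with $JE$. However, two of the identities you single out as carrying ``most of the work'' are false as stated. The identity $(J:J)=(J^v:J^v)$ fails for finitely generated ideals of Mori domains: in the integrally closed pseudo-valuation domain $R=k+M$ of the Example in Section 2, say $V=K[[T]]$, $M=TV$, choose $\alpha\in K$ with $1,\alpha,\alpha^2$ linearly independent over $k$ and put $J=(T,\alpha T)$; then $(J:J)=R$, while $J^v=M$ and $(J^v:J^v)=V$. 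Likewise ``$J^v=J\cdot(J:J)$'' cannot be meant literally, since $J(J:J)=J$ always; the identity you actually need is $J^v=J\cdot(J^v:J^v)$, i.e. $I=JE$, which is essentially the conclusion, not a bookkeeping input. Fortunately neither statement is needed: the trivial inclusion $(J:J)\subseteq (J^v:J^v)=E$ already makes $JE$ an invertible ideal of $E$, which is all your final computation uses.

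The remaining gap is the step ``$(JE)^v=JE$ because invertible ideals are divisorial'': invertibility in $E$ gives divisoriality with respect to the $v$-operation of $E$, whereas your chain $J^v\subseteq (JE)^v=JE\subseteq J^v$ uses the $v$-operation of $R$, and for a general overring these need not agree (an invertible, even principal, $E$-ideal need not be $R$-divisorial unless $E$ itself is). Here it can be repaired: $E=(I:I)$ with $I$ divisorial is a divisorial $R$-module, so each $xE$ is $v$-closed over $R$, hence $A^{v}\subseteq A^{v_E}$ for every fractional $E$-ideal $A$, and for $A=JE$ invertible in $E$ this gives $(JE)^{v}=JE$; combined with $JE\subseteq J^vE=I$ and $I=J^v\subseteq (JE)^{v}$ you obtain $I=JE$ invertible in $E(I)$, and $t$-stability of $R$ follows since every $t$-ideal of a Mori domain has this form. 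Once repaired, your route is essentially the paper's, which sidesteps the divisoriality discussion by using only $E^t=E$ and $(JE)^t=(J^tE)^t=I$ to compute $E=JE(E:JE)=JE(E:(JE)^t)=JE(E:I)\subseteq I(E:I)\subseteq E$, whence $I(E:I)=E$.
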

 \begin{proof}  Let $I=J^t$, with $J$ finitely generated, a $t$-ideal of $R$. Since $R$ is finitely stable, $J$ is invertible in $E(J):=(J:J)$. Since $E(J)\sub E(I):=(I:I)$, $JE(I)$ is invertible in $E(I)$. Since $(JE(I))^t=(J^tE(I))^t=I$, we get
 $E(I)= JE(I)(E(I):JE(I))= JE(I)(E(I):(JE(I))^t)= JE(I)(E(I) : I) \subseteq I(E(I):I) \sub E(I)$.
It follows  that $I$ is invertible in $E(I)$.
 \end{proof}

\begin{prop}  \label{mori2} Let $R$ be a Mori domain.  If $R$ is Clifford $w$-regular, then every $t$-ideal of $R$ is stable. In particular $R$ is $t$-stable.
\end{prop}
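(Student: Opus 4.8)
\emph{Strategy.} The plan is to deduce the statement from Lemma \ref{lemma4}, so it suffices to prove that $R$ is finitely stable, i.e. that every finitely generated ideal $J$ of $R$ is invertible in $E(J)=(J:J)$; since $(E(J):J)=(J:J^2)$, this means $J(J:J^2)=(J:J)$. I would prove this by localizing at the $t$-maximal ideals of $R$, where Clifford $w$-regularity specializes to ordinary Clifford regularity.

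\emph{Reduction and localization.} Fix a finitely generated ideal $J$. As $J$ is finitely generated, residuation commutes with localization, so $J(J:J^2)R_M=JR_M(JR_M:J^2R_M)$ and $(J:J)R_M=(JR_M:JR_M)$ for every prime $M$; hence $J(J:J^2)=(J:J)$ amounts to saying that $JR_M$ is stable in $(JR_M:JR_M)$ for every maximal ideal $M$. Since $R$ is Mori, $(J:J)$ is divisorial, hence a $w$-ideal, and $J(J:J^2)$ is an ideal of the ring $(J:J)$; checking that $J(J:J^2)$ is also $w$-closed, it is enough to test this equality at $M\in\tmax(R)$. For such $M$, Proposition \ref{locMori}(2) gives that $R_M$ is Clifford regular; being a localization of a Mori domain, $R_M$ is itself a (local) Mori domain, and since $MR_M$ is a $t_M$-ideal (Lemma \ref{lemma3}) we have $w=d$ on $R_M$.

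\emph{Local step and conclusion.} Everything thus reduces to the key claim: a local Mori Clifford regular domain is finitely stable. Here one applies Proposition \ref{prop1}(2) with $\ast=d$ — legitimate because $R_M$ is Clifford regular, so every finitely generated ideal $J'$ is $d$-regular — to get that $J'$ is $t_{E'}$-invertible in $E':=(J':J')$, and then upgrades this $t_{E'}$-invertibility to genuine invertibility of $J'$ in $E'$ using the local Mori structure; equivalently, one may cite directly that a Clifford regular domain is finitely stable. (Lemma \ref{lemma4} applied inside $R_M$ then even gives that every $t_M$-ideal of $R_M$ is stable.) Globalizing, $JR_M$ is stable in $(JR_M:JR_M)$ for all $M\in\tmax(R)$, so $J(J:J^2)=(J:J)$ and $R$ is finitely stable; Lemma \ref{lemma4} then yields that every $t$-ideal of $R$ is stable, and in particular $R$ is $t$-stable.

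\emph{Main obstacle.} The heart of the argument is precisely this reduced claim — upgrading the $t_{E'}$-invertibility delivered by Proposition \ref{prop1}(2) into invertibility in $E'$. This is the step that genuinely requires Clifford regularity rather than only $t$-regularity, and it is what makes the Proposition go beyond the earlier observation that a $t$-regular $t$-ideal of a Mori domain need only be $t_E$-invertible. A subsidiary, routine point is checking that the pertinent products of fractional ideals are $w$-closed, which is what permits passing between the global equality $J(J:J^2)=(J:J)$ and its localizations at $\tmax(R)$ rather than at all maximal ideals of $R$.
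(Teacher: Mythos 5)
There is a genuine gap, and it sits exactly where you flagged the ``subsidiary, routine point''. Your reduction is to prove that $R$ itself is finitely stable, and that intermediate statement is false under the hypotheses: take $R=k[x,y]$, which is Noetherian (hence Mori) and Krull, so every $w$-ideal is $w$-invertible and $R$ is even $w$-stable, hence Clifford $w$-regular; yet for $J=(x,y)$ one has $(J:J)=R$ and $J$ is not invertible in $R$, so $R$ is not finitely stable (note that the conclusion of the Proposition is not at issue for this $R$, since every $t$-ideal of a UFD is principal, hence stable). Concretely, the step that fails is the passage from the equalities $J(J:J^2)R_M=(J:J)R_M$ for $M\in\tmax(R)$ to the global equality $J(J:J^2)=(J:J)$: this requires both sides to be $w$-closed, but for a merely finitely generated $J$ neither $(J:J)$ (only $(J^v:J^v)$ is automatically divisorial) nor $J(J:J^2)$ need be $w$-closed. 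In the example above $J$ lies in no height-one prime, so $JR_M=R_M$ and all the localized equalities hold at every $t$-maximal ideal, while $J(J:J^2)\subseteq J\subsetneq R=(J:J)$. Testing instead at all maximal ideals would make the globalization legitimate, but then Proposition \ref{locMori}(2) gives you no Clifford regularity (hence no finite stability) at the maximal ideals that are not $t$-ideals, so the argument cannot be run there either.

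Your local ingredients are the same as the paper's (Proposition \ref{locMori}(2), Bazzoni's theorem that Clifford regular implies finitely stable, and Lemma \ref{lemma4} applied inside $R_M$), but the paper globalizes the conclusion rather than the hypothesis of Lemma \ref{lemma4}: it claims that the property ``every $t$-ideal is stable'' is $t$-local in a Mori domain, using Lemma \ref{lemma3} for the $t$-closed data $I^t$ and $E(I^t)=(I^t:I^t)$ (which localize well because every ideal of a Mori domain is $t$-finite), and only then invokes finite stability of the local rings $R_M$. In other words, what you relegated to a parenthesis --- that Lemma \ref{lemma4} in $R_M$ yields stability of every $t_M$-ideal of $R_M$ --- is the actual engine of the proof, and the work lies in the $t$-local-to-global step for that property (which is precisely where $w$-closedness of the ideals involved must be controlled). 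The route through global finite stability of $R$ cannot be repaired, since that statement is simply false for Mori Clifford $w$-regular domains.
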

\begin{proof}
In a Mori domain, the property that each $t$-ideal is stable is a $t$-local property. In fact, assume that $I^tR_M$ is stable, for each $M\in \tmax(R)$. By using Lemma \ref{lemma3} we have $E(I^t)R_M=E(I^tR_M)= I^tR_M(E(I^tR_M):I^tR_M)=I^t(E(I^t):I^t)R_M$, for each $M\in \tmax(R)$. So that $E(I^t)= I^t(E(I^t):I^t)$.
Thus it is enough to show that every $t_M$-ideal of $R_M$ is stable, for each $M\in \tmax(R)$. By Proposition \ref{locMori}(2), we know that $R_M$ is Clifford regular; hence $R_M$ is finitely stable \cite[Proposition 2.3]{B3}.  Since $R_M$ is Mori, we conclude by Lemma \ref{lemma4}.
\end{proof}

\section{Noetherian and strong Mori domains}

A Clifford (indeed  Boole) $t$-regular  Noetherian domain need not be $t$-stable. In fact Kabbaj and Mimouni gave an example of a Boole $t$-regular local Noetherian domain whose maximal ideal is divisorial of height two \cite[Example 2.4]{KM4}, while as we will see in a moment a $t$-stable Noetherian domain has $t$-dimension one. This last fact was proved in \cite[Lemma 2.7]{KM4} under the stronger hypothesis that each $t$-ideal of $R$ is stable.

We work in the more general context of $\ast$-Noetherian domains.

\begin{prop} \label{overnoeth}
Let $R$ be a domain, $\ast$ a star operation on $R$, $D$ a fractional overring of $R$. If $R$ is $\ast$-Noetherian, then $D$ is $\dot{\ast}$-Noetherian.
\end{prop}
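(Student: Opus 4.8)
The plan is to reduce the statement to the ascending chain condition on $\dot{\ast}$-ideals of $D$ and pull it back to a chain of $\ast$-ideals of $R$ by intersecting with a fixed denominator. First I would fix the hypothesis that $D$ is a \emph{fractional} overring, so there is a nonzero $d\in R$ with $dD\sub R$; then for every $D$-submodule $N$ of $K$ with $D\sub N$ we have that $dN$ is an ideal of $R$ contained in $D$ (after possibly enlarging $d$ to clear the submodule, but since $N\supseteq D$ and we only need $dN\sub R$ for the relevant $N$, one can work with integral $\dot{\ast}$-ideals of $D$, for which $dN\sub dD\sub R$). Recall from the excerpt that $D^{\dot{\ast}}=D$ and $\dot{\ast}$ is a (semi)star operation on $D$, and that $\ast$-Noetherian forces $\ast$ (hence $\dot{\ast}$) to be of finite type.

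The key step is the following correspondence. Let $\{N_n\}$ be an ascending chain of integral $\dot{\ast}$-ideals of $D$. Set $I_n := (dN_n)^\ast$, an integral $\ast$-ideal of $R$ (here I use $dN_n \sub dD \sub R$). This gives an ascending chain of $\ast$-ideals of $R$, which stabilizes because $R$ is $\ast$-Noetherian. It then remains to recover $N_n$ from $I_n$: I claim $N_n = d^{-1}I_n$, i.e. $(dN_n)^\ast = dN_n^{\,\ast}$ and $N_n^{\,\ast}$ computed in $R$ agrees with $N_n^{\dot{\ast}} = N_n$. The first equality is just property (i) of a star operation, $(aI)^\ast = aI^\ast$. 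For the second, observe $dN_n$ is a $D$-module (up to the scalar $d$), so $(dN_n)^\ast$ is again a $D$-module—because $\dot{\ast}$ is the restriction of $\ast$ and $D^\ast=D$, the $\ast$-closure of a $D$-submodule of $K$ is a $D$-submodule, whence $(dN_n)^\ast = (dN_n)^{\dot{\ast}} = dN_n^{\dot{\ast}} = dN_n$. Therefore $N_n = d^{-1}(dN_n)^\ast = d^{-1}I_n$, and stabilization of $\{I_n\}$ forces stabilization of $\{N_n\}$. This proves $D$ satisfies ACC on integral $\dot{\ast}$-ideals; since every $\dot{\ast}$-ideal is a translate of an integral one, $D$ is $\dot{\ast}$-Noetherian.

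The main obstacle is the bookkeeping around the denominator: one must check that a single nonzero $d\in R$ with $dD\sub R$ simultaneously handles every integral $\dot{\ast}$-ideal $N$ of $D$ (it does, since $N\sub D$ gives $dN\sub dD\sub R$), and that the $\ast$-closure in $R$ of a $D$-submodule is still a $D$-submodule—this is exactly where $D=D^\ast$ (equivalently, $D$ being $\ast$-compatible as a semistar operation, as recorded in the excerpt) is used, and it is the crux that makes the two closure operations compatible on $D$-modules. Once that compatibility is in hand, the argument is a routine translate-and-pull-back. A slightly cleaner alternative, which I would mention, is to argue directly with $\dot{\ast}$-finiteness: since $\ast$ is of finite type and $R$ is $\ast$-Noetherian, every $\ast$-ideal is $\ast$-finite; given a $\dot{\ast}$-ideal $N$ of $D$, the ideal $(dN)^\ast$ of $R$ equals $(dJ)^\ast$ for some finitely generated $J\sub dN$, and then $J' := d^{-1}J$ is a finitely generated $D$-submodule of $N$ with $(J'D)^{\dot{\ast}} = d^{-1}(dN)^\ast = N$, exhibiting $N$ as $\dot{\ast}$-finite.
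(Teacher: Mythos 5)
Your proof is correct and takes essentially the same route as the paper's, which simply observes that every $\dot{\ast}$-ideal of $D$ is already a (fractional) $\ast$-ideal of $R$, so the ascending chain condition transfers; your scaling by the denominator $d$ merely spells out the bookkeeping the paper leaves implicit. One small remark: $D^{\dot{\ast}}=D$ is not among the hypotheses and is not needed for your argument, since the identity $N^{\dot{\ast}}=N^{\ast}$ for $D$-submodules is definitional and that is all your chain argument uses.
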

\begin{proof}
A $\dot{\ast}$-ideal of $D$ is in particular a (fractional) $\ast$-ideal of $R$. So the ascending chain condition on $\ast$-ideals of $R$ implies the ascending chain condition on $\dot{\ast}$-ideals of $D$.
\end{proof}

Recall that, for a star operation $\ast$ on $R$, the \emph{$\ast$-integral
closure} of $R$ is the integrally closed overring of $R$ defined by
$R^{[\ast]}:= \bigcup \{(J^\ast:J^\ast); \, J\in{\mc F}(R) \mbox{ finitely generated}\}$ \cite{fl01}.
When $\ast=d$ is the identity, we obtain the integral closure of $R$, here denoted by $R^\prime$. If  $\widetilde{R}:= \bigcup \{(I^v:I^v); \, I\in {\mc F}(R) \}$ is the \emph{complete integral closure} of $R$, we have $R\sub R^\prime \sub R^{[\ast]}\sub \widetilde{R}$.

As shown in \cite[Theorem 4.1]{GP2}, when $\ast = \widetilde{\ast}$,  each pair $R, D$ with $R\sub D \subseteq R^{[\ast]}$ satisfies a star version of Lying Over, Going Up and Incomparability. It follows that the $\ast$-dimension of $R$ and the $\dot{\ast}$-dimension of $R^{[\ast]}$ are the same \cite[Corollary 4.2]{GP2}.

\begin{prop} \label{lemmaN2bis} Let $\ast$ be a star operation on a  domain $R$ and assume that $R$ is $\widetilde{\ast}$-Noetherian. If  each $\ast$-maximal ideal $M$ of $R$ is a $t_E$-invertible $t_E$-ideal in $E:=E(M)$, then $\astdim(R)=1=\tdim(R)$.
\end{prop}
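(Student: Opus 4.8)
The plan is to localize the problem and combine a height bound with a $\ast$-version of the dimension formula already recorded in the excerpt. First I would fix a $\ast$-maximal ideal $M$ of $R$ and set $E:=E(M)=(M:M)$. The key input is the hypothesis that $M$ is a $t_E$-invertible $t_E$-ideal of $E$; I would want to conclude from this that $\op{ht}(M)=1$, and since every $\ast$-maximal ideal has this property and $R$ is $\widetilde{\ast}$-Noetherian (hence $\ast$-finite type after passing to $\widetilde\ast$), this forces $\astdim(R)=1$. To handle the whole statement I would also reduce $\tdim(R)$ to $\astdim$: because $\widetilde{\ast}\leq w\leq t$, any $t$-prime is a $\widetilde\ast$-prime is... wait, the inclusion goes the other way, so I must be careful — what I actually want is that the chain of $\ast$-primes and the chain of $t$-primes have the same supremum of lengths, which will come out of the height-one statement applied uniformly.

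The heart of the argument is the height bound, and this is the step I expect to be the main obstacle. The point: $M$ is $t_E$-invertible in $E$, so by \cite[Proposition 2.6]{K} (quoted in the excerpt), $M$ is $t_E$-finite and $M^{t_E}E_N = ME_N$ is principal for every $N\in t_E\op{-Max}(E)$. Since $R\subseteq E\subseteq R^{[\ast]}$ and $R$ is $\widetilde{\ast}$-Noetherian, $E$ is $\dot{\ast}$-Noetherian by Proposition \ref{overnoeth}, and $\dot\ast=\widetilde{\dot\ast}$-type considerations let me work locally. I would pick a prime $N$ of $E$ minimal over $ME$ (it lies over $M$ by the Lying Over / Going Up / Incomparability package of \cite[Theorem 4.1]{GP2}, valid since $\ast=\widetilde\ast$ can be assumed — here using that $\widetilde{\ast}$-Noetherian already forces $\ast$ finite type so $\widetilde\ast=\widetilde{\widetilde\ast}$ and we may replace $\ast$ by $\widetilde\ast$). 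Then $ME_N$ is principal and $N$ is minimal over it, so by Krull's principal ideal theorem in the Noetherian local ring $E_N$ (which is Noetherian because $E$ is strong Mori / the relevant localizations are Noetherian), $\op{ht}(N)\leq 1$; incomparability then gives $\op{ht}(M)\leq 1$, and since $M\neq(0)$, $\op{ht}(M)=1$.

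Once every $\ast$-maximal ideal has height one, I would finish as follows. Every nonzero $\ast$-prime $P$ is contained in some $\ast$-maximal $M$, so $\op{ht}(P)\leq \op{ht}(M)=1$, giving $\astdim(R)\leq 1$; and $\astdim(R)\geq 1$ because $R$ is not a field (it has a nonzero $\ast$-maximal ideal), so $\astdim(R)=1$. For $\tdim(R)$: since $R$ is $\widetilde{\ast}$-Noetherian, replacing $\ast$ by $\widetilde\ast$ we have $\ast$ spectral of finite type with $\ast\leq w\leq t$; a chain of $t$-primes is in particular a chain of primes, and by the same height-one argument — any height-one prime is a $\ast$-prime for any finite-type operation (noted in the Preliminaries), and conversely no $t$-prime can properly contain a height-one $\ast$-prime without contradicting $\astdim(R)=1$ applied to... — more cleanly, I would invoke \cite[Corollary 4.2]{GP2}: the $\ast$-dimension of $R$ equals the $\dot\ast$-dimension of $R^{[\ast]}$, and run the argument symmetrically, but the most economical route is simply to observe $w\leq t$ forces $\tmax(R)\subseteq$ the $\ast$-maximal structure so that every $t$-maximal ideal also has height one, whence $\tdim(R)=1$ as well. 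The one genuine subtlety to nail down is the passage from ``$M$ is $t_E$-invertible'' to ``$E_N$ is Noetherian local with $ME_N$ principal'', i.e.\ checking that the relevant localization of $E$ is Noetherian; this uses that $R^{[\ast]}$ (hence $E$) inherits a strong-Mori/$\dot\ast$-Noetherian structure and that $\dot\ast\leq t_E$ so $t_E$-localizations are Noetherian.
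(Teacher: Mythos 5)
Your strategy is the paper's in all its main ingredients: pass to $E:=(M:M)$, observe $E\subseteq R^{[\widetilde{\ast}]}$ (via $\widetilde{\ast}$-finiteness of $M$), use Proposition \ref{overnoeth} to make $E$ a $\dot{\widetilde{\ast}}$-Noetherian, hence strong Mori, domain whose localizations at $t_E$-maximal ideals are Noetherian, use $t_E$-invertibility to make $M$ locally principal in $E$, and invoke the Principal Ideal Theorem to get that a prime of $E$ minimal over $M$ has height one. The genuine gap is the sentence ``incomparability then gives $\op{ht}(M)\leq 1$''. Incomparability alone does not transfer a height bound from $E$ down to $R$: knowing that every minimal prime of $ME$ has height one and contracts to $M$ says nothing about a hypothetical chain $(0)\subsetneq P\subsetneq M$ in $R$ unless that chain can be lifted to $E$, and there is no going-down statement in the package that would let height descend directly. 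This lifting is exactly what the paper does and what your sketch omits: assuming $\astdim(R)>1$, take a nonzero prime $P\subsetneq M$ (a height-one prime, which is a $t$-prime hence a $\ast$-prime), lift it by $\widetilde{\ast}$-LO to a $\dot{\widetilde{\ast}}$-prime $Q_1$ of $E$, use $\widetilde{\ast}$-GU to find $Q_2\supseteq Q_1$ with $Q_2\cap R=M$, and then compare: a minimal prime $Q$ of $M$ in $E$ contained in $Q_2$ has height one by your PIT argument, so $Q\subsetneq Q_2$ (since $Q_2\supsetneq Q_1\neq (0)$ forces $\op{ht}(Q_2)\geq 2$), while $M\subseteq Q\cap R\subseteq Q_2\cap R=M$ gives $Q\cap R=Q_2\cap R=M$, contradicting $\widetilde{\ast}$-INC. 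So the engine is GU applied to the chain, with INC only delivering the final contradiction; as written, your descent step fails.

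Two smaller points to tighten. First, \cite[Proposition 2.6]{K} gives principality of $ME_N$ only for $N\in t_E\op{-Max}(E)$; your $N$ is a prime minimal over the $t_E$-ideal $M$, hence a $t_E$-prime contained in some $t_E$-maximal $N'$, and you should pass through $E_{N'}$ (Noetherian since $E$ is strong Mori, with $ME_{N'}$ principal) before localizing further or applying PIT --- this is how the paper phrases it. Second, for $\tdim(R)$: since $R$ is $\ast$-Noetherian, $\ast$ is of finite type, so $\ast\leq t$ and every $t$-prime is a $\ast$-prime, whence $\tdim(R)\leq\astdim(R)=1$; your claim that $\tmax(R)$ sits inside the $\ast$-maximal ideals is not needed and is not correct in general ($t$-maximal ideals are proper $\ast$-ideals, hence contained in $\ast$-maximal ideals, but need not be $\ast$-maximal).
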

\begin{proof}
We adapt the proof of \cite[Lemma 2.7]{KM4}.
Assume that $R$ has $\ast$-dimension greater than one. Note that $\ast$ is of finite type (since $R$ is $\widetilde{\ast}$-Noetherian and so also $\ast$-Noetherian). Let  $P$ be a height-one prime ideal of $R$. Since $P$ is a $t$-ideal, it is also a $\ast$-ideal. So, there exists a $\ast$-maximal ideal $M$ of $R$ containing $P$. Assume $M \neq P$ and let $E := E(M) := (M:M)$. Since $M$ is a $\ast$-ideal, it is also a $\widetilde{\ast}$-ideal, so it is $\widetilde{\ast}$-finite, i.e., $M = J^{\widetilde{\ast}}$ for some finitely generated ideal $J$ of $R$. Thus $E = (J^{\widetilde{\ast}} : J^{\widetilde{\ast}}) \subseteq R^{[\widetilde{\ast}]}$. So, by $\widetilde{\ast}$-GU, there exist two $\dot{\widetilde{\ast}}$-ideals $Q_1 \subsetneq Q_2$ contracting respectively to $P$ and $M$ in $R$. Let $Q$ be a prime ideal of $E$ minimal over $M$ in $E$ such that $Q\sub Q_2$. We want to show that $Q$ has height one. Note that $E$ is $\dot{\widetilde{\ast}}$-Noetherian (Proposition \ref{overnoeth}) and so also strong Mori.
Assume that $M$ is a $t_E$-invertible $t_E$-ideal of $E$. Since $Q$ is minimal over $M$, $Q$ is a $t_E$-prime of $E$ and so $Q\sub N$  for some $t_E$-maximal ideal $N$ of $E$. Now, $M$ is $t_E$-invertible in $E$ and so $ME_N\neq E_N$ is principal. Since $E_N$ is Noetherian (because $E$ is strong Mori), by the Principal Ideal Theorem, $QE_N$ has height one.  It follows that  $Q$ has height-one.

So  $Q\subsetneq  Q_2$ and $Q\cap R=Q_2\cap R=M$, contradicting $\widetilde{\ast}$-INC. It follows that $R$ has $\ast$-dimension one. Since $\ast\leq t$, we have $\astdim(R)=1=\tdim(R)$.
\end{proof}

\begin{cor}\label{corN2}
Let $R$ be a domain and $\ast$ a star operation on $R$. If $R$ is $\widetilde{\ast}$-Noetherian and all the $\ast$-maximal ideals of $R$ are $\ast$-stable, then $\astdim(R)=1=\tdim(R)$.

In particular:
\begin{itemize}
\item[(1)] If  $R$ is Noetherian and $\ast$-stable, then $\astdim(R)=1=\tdim(R)$.

\item[(2)] If $R$ is strong Mori and $t$-stable, then $\tdim(R)=1$.
\end{itemize}
\end{cor}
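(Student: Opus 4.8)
The plan is to reduce everything to Proposition~\ref{lemmaN2bis}: it suffices to check that, under the stated hypotheses, every $\ast$-maximal ideal $M$ of $R$ is a $t_E$-invertible $t_E$-ideal in $E:=E(M)$, and then to read off the two particular statements. \emph{Setup.} Since $\widetilde{\ast}\leq\ast$, a $\widetilde{\ast}$-Noetherian domain is $\ast$-Noetherian, so $\ast$ is of finite type. Fix a $\ast$-maximal ideal $M$; it is prime, and from $M=M^\ast$ and $E:=E(M)=(M:M)$ one has $EM=M$, so $M$ is an (integral) ideal of $E$, and $E^\ast=E$. Hence $\dot{\ast}:=\ast_{\vert E}$ is a (semi)star operation on $E$; it is of finite type because $\ast$ is, so $\dot{\ast}\leq t_E\leq v_E$. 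Finally, since $\dot{\ast}$ is the restriction of $\ast$, for the $E$-module $M$ we have $M^{\dot{\ast}}=M^\ast=M$, that is, $M$ is a $\dot{\ast}$-ideal of $E$.

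\emph{Main step.} By definition, $M$ being $\ast$-stable means precisely that $M=M^{\dot{\ast}}$ is $\dot{\ast}$-invertible in $E$; since $\dot{\ast}\leq t_E$, this already gives that $M$ is $t_E$-invertible in $E$. To see that $M$ is also a $t_E$-ideal I use the elementary fact that a $\dot{\ast}$-invertible ideal $I$ of $E$ satisfies $I^{\dot{\ast}}=I^{v_E}$: indeed $I^{v_E}=I^{v_E}\,(I(E:I))^{\dot{\ast}}\subseteq \big(I\,(E:I)(E:(E:I))\big)^{\dot{\ast}}\subseteq I^{\dot{\ast}}\subseteq I^{v_E}$, using $(E:I)(E:(E:I))\subseteq E$ and $IE=I$. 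Applied to $I=M$, this yields $M=M^{\dot{\ast}}=M^{v_E}$, so $M$ is divisorial in $E$ and a fortiori a $t_E$-ideal. Thus the hypotheses of Proposition~\ref{lemmaN2bis} are met, and $\astdim(R)=1=\tdim(R)$.

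\emph{Particular cases.} For (1), a Noetherian domain is $d$-Noetherian, hence $\widetilde{\ast}$-Noetherian since $d\leq\widetilde{\ast}$; and if $R$ is $\ast$-stable then in particular each $\ast$-maximal ideal is $\ast$-stable, so the main assertion applies. For (2), take $\ast=t$: a strong Mori domain is $w$-Noetherian and $w=\widetilde{t}$ (as $t$ is of finite type), so $R$ is $\widetilde{t}$-Noetherian, while $t$-stability gives that every $t$-maximal ideal is $t$-stable; the main assertion then yields $\tdim(R)=1$.

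Granting Proposition~\ref{lemmaN2bis}, the only genuinely delicate point is passing from ``$\dot{\ast}$-invertible'' to ``$t_E$-invertible $t_E$-\emph{ideal}'', i.e.\ the divisoriality of $M$ in $E$ supplied by the identity $M^{\dot{\ast}}=M^{v_E}$; everything else is bookkeeping with the orderings of (semi)star operations and their restrictions.
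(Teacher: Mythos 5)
Your proposal is correct and follows essentially the same route as the paper: both reduce to Proposition~\ref{lemmaN2bis} by observing that a $\ast$-stable $\ast$-maximal ideal $M$ is $\dot{\ast}$-invertible in $E:=E(M)$ with $\dot{\ast}$ of finite type, hence a $t_E$-invertible $t_E$-ideal, and then handle (1) and (2) by noting that Noetherian implies $\widetilde{\ast}$-Noetherian and by taking $\ast=t$ for strong Mori domains. The only difference is that you spell out the identity $M^{\dot{\ast}}=M^{v_E}$ for a $\dot{\ast}$-invertible ideal, which the paper simply asserts as $M=M^{\dot{\ast}}=M^{t}$; this is a worthwhile but minor elaboration, not a different argument.
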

\begin{proof}
A $\ast$-maximal ideal $M$ of $R$ is $\dot{\ast}$-invertible in $E:=E(M)$; thus it is $t_E$-invertible (note that $\dot{\ast}$ is of finite type since $R$ is $\ast$-Noetherian) and $M = M^{\dot{\ast}} =  M^t$. Thus $M$ is a $t_E$-invertible $t_E$-ideal of $E(M)$ and we can apply Proposition \ref{lemmaN2bis}.

For (1),  note that a Noetherian domain is $\widetilde{\ast}$-Noetherian for every $\ast$. (2) follows for $\ast=t$.
\end{proof}

\begin{cor} \label{N2bis} Let $R$ be a $\widetilde{\ast}$-Noetherian domain. If  $R$ is Clifford $\ast$-regular  and each $\ast$-maximal ideal $M$ of $R$ is a $t_E$-ideal of $E:=E(M)$, then $\astdim(R)=1=\tdim(R)$.
\end{cor}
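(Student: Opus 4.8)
The plan is to reduce this to Proposition~\ref{lemmaN2bis}, exactly as Corollary~\ref{corN2} did, the only difference being that we now have Clifford $\ast$-regularity (rather than $\ast$-stability) of $R$ together with the hypothesis that each $\ast$-maximal ideal $M$ is a $t_E$-ideal of $E:=E(M)$. So the task is to verify that under these weaker hypotheses the key assumption of Proposition~\ref{lemmaN2bis}, namely that every $\ast$-maximal ideal $M$ is a $t_E$-invertible $t_E$-ideal of $E$, still holds.

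First I would observe that since $R$ is $\widetilde{\ast}$-Noetherian it is $\ast$-Noetherian, hence $\ast$ is of finite type; thus a $\ast$-maximal ideal $M$ is $\ast$-finite, i.e. $M=M^\ast=J^\ast$ for some finitely generated ideal $J$. Since $R$ is Clifford $\ast$-regular, $M$ is $\ast$-regular, and then Proposition~\ref{prop1}(2) applies in its second form: because $M=J^\ast$ with $J$ finitely generated, $M^\ast=M$ is $t_E$-invertible in $E=E(M^\ast)=E(M)$. By hypothesis $M$ is already a $t_E$-ideal of $E$, so $M$ is a $t_E$-invertible $t_E$-ideal of $E$. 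This is precisely the hypothesis needed to invoke Proposition~\ref{lemmaN2bis}, which then yields $\astdim(R)=1=\tdim(R)$.

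The only subtlety — and the place where the extra hypothesis ``$M$ is a $t_E$-ideal of $E$'' is doing real work — is that Clifford $\ast$-regularity, unlike $\ast$-stability, does not automatically force $M^{\dot{\ast}}=M$ to coincide with $M^{t_E}$; $\ast$-regularity of a finitely generated ideal gives $t_E$-invertibility via Proposition~\ref{prop1}(2) but says nothing about whether $M$ is $t_E$-closed in $E$. That is exactly why the statement builds it in as an assumption. I should double-check that $E(M^\ast)=E(M)$ here, which is immediate since $M=M^\ast$, and that the ``finitely generated'' clause of Proposition~\ref{prop1}(2) is genuinely available — it is, because $\ast$ of finite type plus $M$ a $\ast$-ideal gives $M=J^\ast$ with $J$ finitely generated, and $J$ itself (not merely $J^\ast$) is the finitely generated ideal whose $\ast$-regularity we need; but $J$ is $\ast$-regular precisely because $[J^\ast]=[M]$ is regular in $\cS_\ast(R)$, so this is fine.

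I do not expect any serious obstacle: the proof is a short deduction, essentially ``Proposition~\ref{prop1}(2) $+$ hypothesis $\Rightarrow$ hypothesis of Proposition~\ref{lemmaN2bis} $\Rightarrow$ conclusion,'' mirroring the proof of Corollary~\ref{corN2} with the single replacement of ``$\ast$-stable, hence $\dot\ast$-invertible and $M=M^t$'' by ``$\ast$-regular, hence $t_E$-invertible by Proposition~\ref{prop1}(2), and $M=M^{t_E}$ by hypothesis.'' Concretely, the write-up will be: let $M$ be a $\ast$-maximal ideal and $E:=E(M)$; since $\ast$ is of finite type, $M=J^\ast$ for a finitely generated $J$, which is $\ast$-regular since $R$ is; by Proposition~\ref{prop1}(2), $M=M^\ast$ is $t_E$-invertible in $E$; by hypothesis $M$ is a $t_E$-ideal of $E$; so Proposition~\ref{lemmaN2bis} gives $\astdim(R)=1=\tdim(R)$.
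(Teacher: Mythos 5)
Your proposal is correct and is essentially the paper's own proof: the authors likewise note that, since $R$ is ($\widetilde{\ast}$-)Noetherian, every $\ast$-maximal ideal is of the form $J^\ast$ with $J$ finitely generated, so Clifford $\ast$-regularity and Proposition~\ref{prop1}(2) give $t_E$-invertibility of $M$ in $E$, and then the extra hypothesis that $M$ is a $t_E$-ideal lets one invoke Proposition~\ref{lemmaN2bis}. Your added checks (that $E(M)=E(M^\ast)$ and that the finitely generated clause of Proposition~\ref{prop1}(2) is genuinely available) are exactly the details the paper leaves implicit.
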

\begin{proof}
As a consequence of Proposition \ref{prop1}(2), in a Clifford $\ast$-regular $\widetilde{\ast}$-Noetherian domain every $\ast$-maximal ideal $M$ is $t_E$-invertible in $E$.  Hence we can apply Proposition \ref{lemmaN2bis}.
\end{proof}

\begin{remark} \rm
Kabbaj and Mimouni showed that in a pullback diagram of type
$$  \begin{CD}
        R   @>>>    k\\
        @VVV        @VVV    \\
        T  @>\varphi>>  K:= \frac TM\\
        \end{CD}$$
where $T:=(T,M)$ is local Noetherian with maximal ideal $M$, $R$ is a Boole $t$-regular domain if and only if so is $T$ \cite[Proposition 2.3]{KM4}.

In this way it is possible to construct, as in \cite[Example 2.4]{KM4}, examples of Boole $t$-regular local Noetherian domains of $t$-dimension greater than one. Namely, if $T$ is a Noetherian UFD of dimension $n\geq 2$ and $[K:k]$ is finite, then $R$ is a Boole $t$-regular local Noetherian domain whose maximal ideal $M$ is divisorial of height $n$. We note that since $M$ is not divisorial in $T$, then $T=(M:M)=:E(M)$ \cite[Proposition 2.7]{GH} and so,  according to Corollary \ref{N2bis}, $M$ is not a $t_E$-ideal of $E:=E(M)$.
\end{remark}

Next, we prove a technical result that can be applied both to Noetherian $\ast$-regular domains and to strong Mori $t$-regular domains.

\begin{prop} \label{lemmaN1bis}
Let $\ast_1 = \widetilde{\ast_1} \leq \ast_2$ be two star operations on a domain $R$. Assume that $R$ is $\ast_1$-Noetherian of $\ast_1$-dimension one. The following conditions are equivalent for an ideal $I$ of $R$:
\begin{itemize}
\item[(i)] $I$ is Clifford $\ast_2$-regular (respectively, Boole $\ast_2$-regular) ;
\item[(ii)] $I$ is $\ast_2$-stable (respectively, strongly $\ast_2$-stable);
\item[(iii)] $I^{\ast_2}$ is $\dot{\ast_1}$-invertible (respectively, principal) in $E(I^{\ast_2}):=(I^{\ast_2}:I^{\ast_2})$.
\end{itemize}
Hence $R$ is Clifford $\ast_2$-regular  (respectively, Boole $\ast_2$-regular) if and only if $R$ is $\ast_2$-stable (respectively, strongly $\ast_2$-stable).
\end{prop}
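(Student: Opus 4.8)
The plan is to prove the cycle (ii)$\Rightarrow$(i)$\Rightarrow$(iii)$\Rightarrow$(ii); once (i)$\Leftrightarrow$(ii) is known for every ideal, the last assertion follows by quantifying over all ideals of $R$. The implication (ii)$\Rightarrow$(i) is Proposition \ref{prop1}(1) (and Proposition \ref{prop1}(3) for the Boole/strongly stable variant). For (iii)$\Rightarrow$(ii), the inequality $\ast_1\le\ast_2$ restricts to $\dot{\ast_1}\le\dot{\ast_2}$ on $E:=E(I^{\ast_2})$, so that $H^{\dot{\ast_1}}=E$ forces $H^{\dot{\ast_2}}=E$, where $H:=I^{\ast_2}(E:I^{\ast_2})$; hence a $\dot{\ast_1}$-invertible ideal of $E$ is $\dot{\ast_2}$-invertible, i.e.\ $I$ is $\ast_2$-stable, while ``$I^{\ast_2}$ principal in $E(I^{\ast_2})$'' is by definition strong $\ast_2$-stability.

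Everything thus reduces to (i)$\Rightarrow$(iii), and I would begin by describing $E:=E(I^{\ast_2})$. Since $\ast_1\le\ast_2$, the ideal $I^{\ast_2}$ is a $\ast_1$-ideal, hence $\ast_1$-finite because $R$ is $\ast_1$-Noetherian; writing $I^{\ast_2}=J^{\ast_1}$ with $J$ finitely generated gives $E=(J^{\ast_1}:J^{\ast_1})\subseteq R^{[\ast_1]}$. By Proposition \ref{overnoeth}, $E$ is $\dot{\ast_1}$-Noetherian; and since $\ast_1=\widetilde{\ast_1}$ is spectral of finite type, so is its restriction $\dot{\ast_1}$ (see \cite[Proposition 2.8]{fl01} and \cite[Proposition 3.1]{giampa}). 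As $\dot{\ast_1}=\widetilde{\dot{\ast_1}}\le w_E$, it follows that $E$ is $w_E$-Noetherian, i.e.\ strong Mori, in particular Mori. Moreover, as $\ast_1=\widetilde{\ast_1}$, the extension $R\subseteq E\subseteq R^{[\ast_1]}$ satisfies $\ast_1$-Lying Over, $\ast_1$-Going Up and $\ast_1$-Incomparability by \cite[Theorem 4.1, Corollary 4.2]{GP2}, so that the $\dot{\ast_1}$-dimension of $E$ does not exceed the $\ast_1$-dimension of $R$, which is $1$.

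The key point is that \emph{every $\dot{\ast_1}$-maximal ideal $N$ of $E$ satisfies $N^{v_E}\neq E$}. First, $N$ has height $1$: it is a nonzero prime, and since every height-one prime of $E$ is a $\dot{\ast_1}$-prime, a prime chain of length $\ge 2$ below $N$ would produce a chain of $\dot{\ast_1}$-primes, contradicting that the $\dot{\ast_1}$-dimension of $E$ is $1$. Being of height $1$, $N$ is a $t_E$-prime, so $N\subseteq M$ for some $t_E$-maximal ideal $M$; as $E$ is strong Mori, $E_M$ is Noetherian, and therefore so is its localization $E_N=(E_M)_{NE_M}$, which is thus a one-dimensional Noetherian local domain. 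Its maximal ideal $NE_N$ is consequently divisorial, i.e.\ $(NE_N)^{v_{E_N}}=NE_N\subsetneq E_N$. On the other hand $N$ is $\dot{\ast_1}$-finite, say $N=J_0^{\dot{\ast_1}}$ with $J_0$ finitely generated, so $N^{v_E}=J_0^{v_E}=J_0^{t_E}=N^{t_E}$; by Lemma \ref{lemma3} applied to the ring of fractions $E_N$ of the Mori domain $E$, $N^{v_E}E_N=N^{t_E}E_N=(NE_N)^{t_{E_N}}=(NE_N)^{v_{E_N}}=NE_N\subsetneq E_N$, and hence $N^{v_E}\neq E$.

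Now let $I$ be Clifford $\ast_2$-regular. By Proposition \ref{prop1}(2), $H:=I^{\ast_2}(E:I^{\ast_2})$ satisfies $H^{v_E}=E$. If $I^{\ast_2}$ were not $\dot{\ast_1}$-invertible, i.e.\ $H^{\dot{\ast_1}}\subsetneq E$, then, $\dot{\ast_1}$ being of finite type, $H\subseteq H^{\dot{\ast_1}}\subseteq N$ for some $\dot{\ast_1}$-maximal ideal $N$ of $E$, whence $E=H^{v_E}\subseteq N^{v_E}\subsetneq E$ by the previous paragraph — a contradiction. So $I^{\ast_2}$ is $\dot{\ast_1}$-invertible, which is (iii); and if in addition $I$ is Boole $\ast_2$-regular, then $I$ is $\ast_2$-stable by what has just been shown together with (iii)$\Rightarrow$(ii), so Proposition \ref{prop1}(3) yields that $I$ is strongly $\ast_2$-stable, i.e.\ $I^{\ast_2}$ is principal in $E$. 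The main obstacle, I expect, is the key point of the third paragraph: a $\dot{\ast_1}$-maximal ideal of $E$ need not be $t_E$-maximal, so one cannot appeal to it directly, and must instead use the height bound together with strong Mori-ness to reach $E_N$ (a one-dimensional Noetherian local domain, hence with divisorial maximal ideal) and the localization formula of Lemma \ref{lemma3}, which is available at every ring of fractions of $E$.
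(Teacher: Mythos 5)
Your proposal is correct, and its skeleton (the cycle (ii)$\Rightarrow$(i)$\Rightarrow$(iii)$\Rightarrow$(ii), the identification $E:=E(I^{\ast_2})=(J^{\ast_1}:J^{\ast_1})\subseteq R^{[\ast_1]}$, Proposition \ref{overnoeth}, and the $\dot{\ast_1}$-dimension-one statement via \cite[Corollary 4.2]{GP2}) coincides with the paper's; but the decisive step (i)$\Rightarrow$(iii) is carried out by a genuinely different mechanism. The paper stays at the level of maximal spectra: since $\dot{\ast_1}\leq t_E$ and the $\dot{\ast_1}$-dimension of $E$ is one, the $t_E$-maximal and $\dot{\ast_1}$-maximal ideals of $E$ coincide, so $t_E$-invertibility equals $\dot{\ast_1}$-invertibility, and $I^{\ast_2}=H^{\ast_2}$ (with $H$ finitely generated, $H^{\ast_1}=I^{\ast_2}$) is $t_E$-invertible by the second half of Proposition \ref{prop1}(2). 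You instead use only the first half of Proposition \ref{prop1}(2) ($v_E$-invertibility, no finitely generated representative needed at this point) and then show that every $\dot{\ast_1}$-maximal ideal $N$ satisfies $N^{v_E}\neq E$, by forcing $N$ to have height one, passing through a $t_E$-maximal ideal to the one-dimensional Noetherian local domain $E_N$ (here you need $E$ strong Mori, hence the spectrality of $\dot{\ast_1}$), using divisoriality of its maximal ideal, and pulling back with Lemma \ref{lemma3}. What each buys: the paper's route is shorter, needs only $\dot{\ast_1}\leq t_E$ (not $E$ strong Mori), and never leaves the star-theoretic level; yours weakens the input from Proposition \ref{prop1}(2) but pays with the heavier machinery of Noetherian localizations, closer in spirit to the paper's Proposition \ref{lemmaN2bis}. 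Two small points to tighten: the claim that $\dot{\ast_1}$ is again spectral of finite type (needed for $\dot{\ast_1}\leq w_E$, hence strong Mori-ness of $E$) should be justified exactly as the paper does implicitly in Proposition \ref{lemmaN2bis}; and in the height-one argument a nonzero prime $Q\subsetneq N$ need not itself have height one, so one should take a minimal prime over a nonzero principal ideal inside $Q$ (a $\dot{\ast_1}$-prime, since $\dot{\ast_1}$ is of finite type) to obtain the forbidden chain of $\dot{\ast_1}$-primes. Neither is a real gap.
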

\begin{proof} (i) $\ra$ (iii) Assume that $I$ is Clifford $\ast_2$-regular. The overring $E:=E(I^{\ast_2}):=(I^{\ast_2}:I^{\ast_2})$ is $\dot{\ast_1}$-Noetherian by Proposition \ref{overnoeth}. Moreover, $E$ has $\dot{\ast_1}$-dimension one. Indeed, $I^{\ast_2}$ is a $\ast_1$-ideal, so by $\ast_1$-Noetherianity there exists a finitely generated ideal $H$ of $R$ such that $I^{\ast_2} = H^{\ast_1}$. Hence $E = E(H^{\ast_1}) \subseteq R^{[\ast_1]}$ and the $\dot{\ast_1}$-dimension of $E$ is one by \cite[Corollary 4.2]{GP2}. Since $\ast_1$ is of finite type, $\dot{\ast_1}\leq t_E$ and $E$ has $t_E$-dimension one.  Hence
$t_E$-$\Max(E) = \dot{\ast_1}$-$\Max(E)$
and so $t_E$-invertibility coincides with $\dot{\ast_1}$-invertibility. Now $I^{\ast_2}=H^{\ast_1}=H^{\ast_2}$ is $t_E$-invertible in $E$ by Proposition  \ref{prop1}(2). Thus $I^{\ast_2}$ is  $\dot{\ast_1}$-invertible.

 If $I$ is Boole $\ast_2$-regular, we have just seen that $I^{\ast_2}$ is $\dot{\ast_1}$-invertible in $E$. Since $\ast_1 \leq \ast_2$, $I$  is also $\dot{\ast_2}$-invertible.  Hence $I$ is strongly $\ast_2$-stable by Proposition \ref{prop1}(3).

(iii) $\ra$ (ii) is clear since $\ast_1 \leq \ast_2$ and (ii) $\ra$ (i) by Proposition \ref{prop1}(1).
\end{proof}

\begin{cor} \label{lemmaN1}
Let $R$ be a one-dimensional Noetherian domain and $\ast$ a star operation on $R$. The following conditions are equivalent for an ideal $I$ of $R$:
\begin{itemize}
\item[(i)] $I$ is Clifford $\ast$-regular  (respectively, Boole $\ast$-regular);
\item[(ii)] $I$ is $\ast$-stable (respectively, strongly $\ast$-stable);
\item[(iii)] $I^\ast$ is invertible (respectively, principal) in $E(I^\ast)$.
\end{itemize}
Hence $R$ is Clifford  $\ast$-regular (respectively, Boole $\ast$-regular) if and only if $R$ is $\ast$-stable (respectively, strongly $\ast$-stable). In addition, under any of these conditions $\widetilde{\ast}=d$.
\end{cor}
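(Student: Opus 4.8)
The plan is to obtain this as the special case $\ast_1=d$, $\ast_2=\ast$ of Proposition \ref{lemmaN1bis}. First I would check that the hypotheses of that proposition are met with this choice: the identity $d$ is spectral and of finite type, so $d=\widetilde{d}$; a Noetherian domain is precisely a $d$-Noetherian domain; and a one-dimensional domain has $d$-dimension one. Since $d\leq \ast$ for every star operation $\ast$, Proposition \ref{lemmaN1bis} applies verbatim and delivers at once the equivalence of (i), (ii) and (iii), as well as the ring-level statement. The only cosmetic point is that the restriction $\dot{d}$ of the identity (semi)star operation to the $E(I^\ast)$-submodules of $K$ is again the identity, so ``$\dot{d}$-invertible (respectively, principal) in $E(I^\ast)$'' in condition (iii) of Proposition \ref{lemmaN1bis} reads simply ``invertible (respectively, principal) in $E(I^\ast)$'', which is condition (iii) here.

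It then remains to prove the final assertion $\widetilde{\ast}=d$. Here I would argue directly: assuming $R$ is not a field (the field case being trivial), in a one-dimensional Noetherian domain every maximal ideal has height one, hence is a $t$-ideal; being maximal among all proper ideals it is a fortiori maximal among proper $t$-ideals, and conversely every proper $t$-ideal is contained in a maximal ideal. Thus $\tmax(R)=\Max(R)$, whence $w=d$; since $d\leq \widetilde{\ast}\leq w$ for every star operation, we get $\widetilde{\ast}=d$. (In fact this holds for every star operation $\ast$ on $R$, regardless of the regularity hypotheses; it is recorded under the equivalent conditions only because that is where it is used.)

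I do not anticipate any genuine obstacle. The substantive content — passing from Clifford/Boole $\ast$-regularity to $\ast$-stability/strong $\ast$-stability in dimension one — has already been carried out in Proposition \ref{lemmaN1bis}, and the present corollary is essentially its instance $\ast_1=d$, together with the elementary observation on $t$-maximal ideals above. The only steps requiring a moment's attention are the identification $\dot{d}=d$ on $E(I^\ast)$ and the verification that the ``$\ast_1$-dimension one'' hypothesis of Proposition \ref{lemmaN1bis} specializes to the Krull-dimension-one hypothesis when $\ast_1=d$.
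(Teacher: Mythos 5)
Your proof is correct, and for the equivalences (i)--(iii) and the ring-level statement it is exactly the paper's argument: the paper's proof of this corollary is simply ``Apply Proposition \ref{lemmaN1bis} for $\ast_1=d$,'' and your verification that the hypotheses specialize correctly ($d=\widetilde{d}$, $d$-Noetherian $=$ Noetherian, $d$-dimension $=$ Krull dimension, $\dot{d}$-invertible $=$ invertible in $E(I^\ast)$) is exactly what that reduction requires. The only divergence is in the final assertion $\widetilde{\ast}=d$: the paper deduces it from the hypotheses, citing \cite[Corollary 1.6]{GP} to get $\widetilde{\ast}=w$ from $\ast$-stability and then using $w=d$ in dimension one, whereas you argue directly that in a one-dimensional domain every maximal ideal is a height-one prime, hence a $t$-ideal, so $\tmax(R)=\Max(R)$ and $w=d$, and then $d\leq\widetilde{\ast}\leq w=d$ forces $\widetilde{\ast}=d$. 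Your route is more elementary (no appeal to the external stability result) and yields the slightly stronger, unconditional statement that $\widetilde{\ast}=d$ for every star operation on a one-dimensional domain, which your parenthetical correctly notes; the paper's route merely records the equality under the equivalent conditions, which is all the corollary claims. Both arguments are sound.
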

\begin{proof} Apply Proposition \ref{lemmaN1bis} for $\ast_1=d$.

In addition, if $R$ is $\ast$-stable, $\widetilde{\ast}=w$ \cite[Corollary 1.6]{GP} and if $R$ is one-dimensional $w=d$.
\end{proof}

In the local one-dimensional Noetherian case  $\ast$-stability and Boole $\ast$-regularity are equivalent.

\begin{prop} \label{lemmaN1-a}
Let $R$ be a local one-dimensional Noetherian domain and $\ast$ a star operation on $R$. The following conditions are equivalent for an ideal $I$ of $R$:
\begin{itemize}
\item[(i)] $I$ is Clifford $\ast$-regular;
\item[(ii)] $I$ is $\ast$-stable;
\item[(iii)] $I^\ast$ is invertible in $E(I^\ast)$;
\item[(iv)] $I^\ast$ is principal  in $E(I^\ast)$;
\item[(v)]  $I$ is Boole $\ast$-regular.
\end{itemize}
Thus  $R$ is Clifford $\ast$-regular if and only if $R$ is Boole $\ast$-regular if and only if $R$ is (strongly) $\ast$-stable. In addition, under any of these conditions $\widetilde{\ast}=d$.
\end{prop}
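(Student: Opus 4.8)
The plan is to lean on Corollary~\ref{lemmaN1}, which applies since a local one-dimensional Noetherian domain is in particular a one-dimensional Noetherian domain. That corollary already gives the two chains of equivalences (i) $\lra$ (ii) $\lra$ (iii) and (iv) $\lra$ (v); moreover (iv) $\ra$ (iii) is trivial (a principal ideal is invertible) and (v) $\ra$ (i) is immediate, since Boole $\ast$-regularity implies Clifford $\ast$-regularity. So the only missing link is (iii) $\ra$ (iv): if $I^\ast$ is invertible in $E:=E(I^\ast)$, then it is already principal in $E$. This is the one step where the hypothesis that $R$ is \emph{local} is genuinely used; dropping it, a one-dimensional Noetherian domain may well be Clifford $\ast$-regular without being Boole $\ast$-regular.

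For (iii) $\ra$ (iv) the key observation is that $E=(I^\ast:I^\ast)$ is a finitely generated $R$-module. Indeed, picking $0\neq a\in I^\ast$ and $0\neq d\in R$ with $dI^\ast\sub R$, one has $(da)E\sub dI^\ast\sub R$, so $E$ is a fractional ideal of the Noetherian domain $R$, hence finitely generated over $R$. Therefore $E$ is integral over $R$, and since $R$ is local with maximal ideal $\mathfrak m$, the $R/\mathfrak m$-algebra $E/\mathfrak m E$ is finite-dimensional, hence Artinian; as every maximal ideal of $E$ contracts to $\mathfrak m$ by integrality, $E$ has only finitely many maximal ideals, that is, $E$ is semilocal. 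Now I would invoke the classical fact that an invertible ideal of a semilocal domain is principal: applied to the ideal $I^\ast$ of $E$, which is invertible in $E$ by~(iii), this yields $I^\ast=xE$ for some $x\in K$, which is~(iv). This closes the cycle and proves that (i)--(v) are equivalent.

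Finally, the concluding assertions --- that $R$ is Clifford $\ast$-regular if and only if it is Boole $\ast$-regular if and only if it is (strongly) $\ast$-stable, and that $\widetilde{\ast}=d$ under any of these conditions --- follow exactly as in Corollary~\ref{lemmaN1}, using \cite[Corollary 1.6]{GP} together with the equality $w=d$ valid in dimension one. There is no real obstacle beyond Corollary~\ref{lemmaN1}: the whole argument reduces to the semilocality of $E(I^\ast)$, and the only point that needs care is checking that $E(I^\ast)$ is a fractional ideal of $R$, which is exactly what forces it to be module-finite and hence semilocal.
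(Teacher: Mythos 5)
Your proof is correct and follows essentially the same route as the paper: everything is reduced via Corollary~\ref{lemmaN1} to the implication (iii) $\ra$ (iv), which both you and the paper settle by showing that $E(I^\ast)$ is semilocal (module-finite, hence integral, over the local ring $R$) and invoking that invertible ideals of semilocal domains are principal. The only cosmetic difference is that the paper cites Bourbaki for the finiteness of the primes of $E$ lying over the maximal ideal, whereas you obtain semilocality directly from the Artinian quotient $E/\mathfrak m E$; this changes nothing of substance.
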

\begin{proof} (i) $\lra$ (ii) $\lra$ (iii) and (iv) $\lra$ (v)  by Corollary \ref{lemmaN1}. (v) $\ra$ (i) is clear.

(iii) $\ra$ (iv) Since $R$ is Noetherian, $E:=E(I^\ast)$ is a finitely generated $R$-algebra and $E$ is integral over $R$. By \cite[Chapitre 5, \S 2, n. 1, Proposition 3]{Bou} the number of prime ideals of $E$ contracting to the maximal ideal of $R$ is finite.  Hence $E$ is semilocal and it follows that $I^\ast$ is principal in $E$.

\smallskip
Finally $\widetilde{\ast}=d$ by Corollary \ref{lemmaN1}.
\end{proof}

\begin{remark} \rm  Proposition \ref{lemmaN1-a}  does not hold in the non-local case. In fact, any Dedekind domain that is not a PID furnishes an example of a one-dimensional Noetherian domain that is stable but not  strongly stable.
\end{remark}

For Boole $t$-regularity, the equivalence of conditions (i) and (ii) in the next corollary was proven in \cite[Theorem 2.10]{KM4}.

\begin{cor}
\label{onedim2}  Let $R$ be a strong Mori domain. The following statements are equivalent:
\begin{itemize}
\item [(i)]  $R$ is Clifford $t$-regular (respectively, Boole $t$-regular) of $t$-dimension one;
\item [(ii)] $R$ is $t$-stable (respectively, strongly $t$-stable);
\item[(iii)] $R$ is Clifford $t$-regular (respectively, Boole $t$-regular) and the $t$-maximal ideals of $R$ are $t$-stable.
   \end{itemize}
\end{cor}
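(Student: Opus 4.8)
The plan is to prove the cyclic chain (i) $\ra$ (ii) $\ra$ (iii) $\ra$ (i), carrying the unbracketed and the bracketed (Boole / strongly $t$-stable) versions along together, since every ingredient I want to use --- Propositions \ref{prop1} and \ref{lemmaN1bis} and Corollary \ref{corN2} --- is available in both forms. The organizing remark is that a strong Mori domain is precisely a $w$-Noetherian (equivalently, $\widetilde{t}$-Noetherian) domain, which is exactly the hypothesis that makes Proposition \ref{lemmaN1bis} and Corollary \ref{corN2} applicable.

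For (i) $\ra$ (ii) I would invoke Proposition \ref{lemmaN1bis} with $\ast_1 := w = \widetilde{t}$ and $\ast_2 := t$, so that $\widetilde{\ast_1} = \ast_1 \leq \ast_2$. Two hypotheses must be checked: that $R$ is $\ast_1$-Noetherian, which is immediate since $R$ is strong Mori, and that $R$ has $w$-dimension one, which I would extract from the assumption $\tdim(R) = 1$ in (i) as follows. First, $\tdim(R) = 1$ forces every $t$-maximal ideal $M$ of $R$ to have height one: if $\op{ht}(M) \geq 2$, pick $0 \neq a \in M$ and a prime $Q \sub M$ minimal over $(a)$; since $R$ is strong Mori, $R_M$ is Noetherian, so Krull's principal ideal theorem gives $\op{ht}(Q) = 1$, and $Q \subneq M$, whence $(0) \subneq Q \subneq M$ is a chain of $t$-primes (height-one primes and $t$-maximal ideals both being $t$-primes), contradicting $\tdim(R) = 1$. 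Since $w$ is of finite type, every $w$-prime of $R$ lies inside some $w$-maximal ($= t$-maximal) ideal, so every chain of $w$-primes has length at most one and $R$ has $w$-dimension one. Proposition \ref{lemmaN1bis} then turns the Clifford (resp. Boole) $t$-regularity of $R$ into $t$-stability (resp. strong $t$-stability).

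The implication (ii) $\ra$ (iii) is formal: if $R$ is $t$-stable then every ideal, in particular every $t$-maximal ideal, is $t$-stable, and $R$ is Clifford $t$-regular by Proposition \ref{prop1}(1); if $R$ is strongly $t$-stable, then by Proposition \ref{prop1}(3) it is both $t$-stable (so its $t$-maximal ideals are $t$-stable) and Boole $t$-regular. For (iii) $\ra$ (i) I would feed ``$R$ is $\widetilde{t}$-Noetherian with all $t$-maximal ideals $t$-stable'' into Corollary \ref{corN2} to get $\tdim(R) = 1$; together with the Clifford (resp. Boole) $t$-regularity assumed in (iii), this is precisely (i).

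I expect the only genuine obstacle to be the passage from $\tdim(R) = 1$ to ``$R$ has $w$-dimension one'' needed to license Proposition \ref{lemmaN1bis}; the delicate points there are checking that the prime $Q$ built inside a $t$-maximal ideal of (hypothetical) height $\geq 2$ lies genuinely strictly between $(0)$ and $M$, and using correctly that a $w$-prime --- being a $w$-ideal for the finite-type operation $w$ --- must be contained in a $w$-maximal ideal. All the remaining steps are direct bookkeeping with the quoted results.
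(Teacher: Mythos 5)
Your proof is correct and takes essentially the same route as the paper: Proposition \ref{lemmaN1bis} with $\ast_1=w$, $\ast_2=t$ together with Corollary \ref{corN2} and Proposition \ref{prop1} (the paper organizes it as (i) $\Leftrightarrow$ (ii) plus (ii) $\Rightarrow$ (iii) $\Rightarrow$ (i), with exactly these ingredients). Your explicit check that $t$-dimension one forces $w$-dimension one (so that the hypothesis of Proposition \ref{lemmaN1bis} is really met) is a detail the paper leaves implicit, and your argument for it is sound.
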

\begin{proof}
(i) $\Leftrightarrow$ (ii) follows by Proposition \ref{lemmaN1bis} (for $\ast_1=w$ and $\ast_2=t$) and the fact that a $t$-stable strong Mori domain has $t$-dimension one (Corollary \ref{corN2}).

(ii) $\Rightarrow$ (iii) is obvious and (iii) $\Rightarrow$ (i) follows by Corollary \ref{corN2}.
\end{proof}

By \cite[Proposition 1.6]{GP2}, $w$-regularity and $w$-stability coincide on strong Mori domains, because each $w$-ideal is $w$-finite.  We now give another proof  by using  Proposition \ref{lemmaN1bis}.

\begin{cor}
\label{onedim1}  Let $R$ be a strong Mori domain. The following statements are equivalent:
\begin{itemize}
\item [(i)]  $R$ is Clifford $w$-regular (respectively, Boole $w$-regular);
\item [(ii)] $R$ is $w$-stable (respectively, strongly $w$-stable).
   \end{itemize}
\end{cor}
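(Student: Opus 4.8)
The plan is to obtain both equivalences from Proposition \ref{lemmaN1bis} applied with $\ast_1=\ast_2=w$. Since $w=\widetilde w$ is of finite type and $R$, being strong Mori, is $w$-Noetherian, the only hypothesis of that proposition that needs checking is that $R$ has $w$-dimension one. The implication (ii)\,$\Rightarrow$\,(i) requires no preparation and no use of the strong Mori hypothesis: if $R$ is $w$-stable then each ideal of $R$ is $w$-regular by Proposition \ref{prop1}(1), so $R$ is Clifford $w$-regular; and if $R$ is strongly $w$-stable then each ideal of $R$ is Boole $w$-regular by Proposition \ref{prop1}(3), so $R$ is Boole $w$-regular.

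For (i)\,$\Rightarrow$\,(ii), assume $R$ is Clifford $w$-regular (the Boole case is subsumed, since Boole $w$-regularity implies Clifford $w$-regularity). By Proposition \ref{mori2}, $R$ is $t$-stable, whence $\tdim(R)=1$ by Corollary \ref{corN2}(2). I would then deduce that $w\text{-}\dim(R)=1$: if some $M\in\tmax(R)$ had height at least two, a height-one prime $Q\subsetneq M$ would give a chain $0\subsetneq Q\subsetneq M$ of $t$-primes (height-one primes and $t$-maximal ideals are $t$-primes), contradicting $\tdim(R)=1$; so every $t$-maximal ideal has height at most one, and since $\tmax(R)=w\text{-}\Max(R)$ and $w$ is of finite type, every nonzero $w$-prime of $R$ is contained in, hence equal to, some $t$-maximal ideal. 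Now Proposition \ref{lemmaN1bis} with $\ast_1=\ast_2=w$ applies and gives that $R$ is Clifford $w$-regular if and only if $R$ is $w$-stable, and Boole $w$-regular if and only if $R$ is strongly $w$-stable.

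I do not expect a real obstacle: the argument is essentially an assembly of Proposition \ref{mori2}, Corollary \ref{corN2}(2) and Proposition \ref{lemmaN1bis}, and the only step needing a (short) argument is the passage from $\tdim(R)=1$ to $w\text{-}\dim(R)=1$. If one prefers to bypass the height computation, an alternative is to note that for each $M\in\tmax(R)$ the localization $R_M$ is Clifford regular by Proposition \ref{locMori}(2) and Noetherian (as $R$ is strong Mori), hence finitely stable, hence stable, hence one-dimensional, so that again all $t$-maximal ideals of $R$ have height one and $w\text{-}\dim(R)=1$.
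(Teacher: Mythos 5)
Your proof is correct and follows essentially the same route as the paper: Proposition \ref{mori2} gives $t$-stability, Corollary \ref{corN2} gives $t$-dimension one, and then Proposition \ref{lemmaN1bis} is applied with $\ast_1=\ast_2=w$. The only difference is that you explicitly justify the passage from $t$-dimension one to $w$-dimension one (needed to meet the $\ast_1$-dimension hypothesis), a step the paper leaves implicit; your justification is valid here since $R$ is strong Mori, so each $R_M$, $M\in\tmax(R)$, is Noetherian and a height-one prime strictly below a height-$\geq 2$ $t$-maximal ideal indeed exists.
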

\begin{proof}
A Clifford $w$-regular strong Mori domain is $t$-stable  (Proposition \ref{mori2}) and so it has $t$-dimension one (Corollary \ref{corN2}). Hence we can apply Proposition \ref{lemmaN1bis} for $\ast_1=\ast_2=w$.
\end{proof}

Motivated by the fact that a Krull Boole $t$-regular domain is a UFD \cite[Proposition 2.2]{KM2}, Kabbaj and Mimouni  ask whether the integral closure of a Noetherian Boole $t$-regular domain is a UFD \cite[Question 2.11(3)]{KM4}. The answer is positive for $w$-regularity (recall that in a Krull domain $t=w$).

\begin{prop} \label{Q2}  Let $R$ be a strong Mori domain. If  $R$ is Boole $w$-regular then $R^{[w]}$ is a UFD.
\end{prop}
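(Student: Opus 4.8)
The plan is to prove that every $t_D$-ideal of $D:=R^{[w]}$ is principal; since $D$ will also turn out to be a Mori domain, this forces $D$ to be a Krull domain with trivial divisor class group, that is, a UFD. First I would unwind the hypotheses: as $R$ is Boole $w$-regular it is Clifford $w$-regular, hence $t$-stable by Proposition \ref{mori2}, so $\tdim(R)=1$ by Corollary \ref{corN2}(2); and by Corollary \ref{onedim1} $R$ is in fact strongly $w$-stable, so every ideal $H$ of $R$ satisfies $H^w=x\,E(H^w)$ for some $x\in K$, where $E(H^w):=(H^w:H^w)$.

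Next I would check that $D:=R^{[w]}=\bigcup\{(J^w:J^w)\mid J\in\mc F(R)\text{ finitely generated}\}$ is $t$-linked over $R$. This union is directed, and each $(J^w:J^w)$ is a $w$-ideal of $R$ (recall that $E^w=E$ for $E=E(J^w)$, viewing $w$ as a (semi)star operation on $R$); since $w$ is of finite type it follows that $D^w=D$, i.e.\ $D$ is $w$-compatible, equivalently $t$-linked, over $R$. As $R$ is a strong Mori domain of $t$-dimension one, \cite[Theorem 3.4]{FC} then yields that $D$ is a Mori domain. (Alternatively, arguing as in Proposition \ref{overnoeth}, one sees that every $\dot w$-ideal of $D$ is a $w$-finite $w$-ideal of $R$, hence $\dot w$-finite over $D$, so that $D$ is $\dot w$-Noetherian and therefore Mori.)

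The heart of the proof is the following. Let $A$ be a $t_D$-ideal of $D$. Since $D$ is Mori, $A=(H_0D)^{t_D}$ for a finitely generated ideal $H_0$ of $R$ — take $D$-generators of a $t_D$-finite presentation of $A$ and let $H_0$ be the (fractional) $R$-ideal they generate. Applying strong $w$-stability to $H_0$, write $H_0^w=x\,E_0$ with $E_0:=(H_0^w:H_0^w)$ and $x\in K$. Since $H_0$ is finitely generated, $E_0\subseteq R^{[w]}=D$ by the very definition of the $w$-integral closure, so $E_0D=D$. Hence $H_0D\subseteq H_0^wD=xE_0D=xD$, and therefore $A=(H_0D)^{t_D}\subseteq(xD)^{t_D}=xD$. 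For the opposite inclusion, $x\in H_0^w$ (as $1\in E_0$), and $H_0^w\subseteq(H_0D)^w=(H_0D)^{\dot w}\subseteq(H_0D)^{t_D}=A$, using that $\dot w:=w_{\vert D}$ is a finite-type (semi)star operation on $D$, so that $\dot w\le t_D$. Thus $x\in A$, whence $xD\subseteq A$ and $A=xD$ is principal.

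Consequently every $t_D$-ideal of $D$ is principal; in particular every finitely generated ideal of $D$ is $t_D$-invertible, so $D$ is a P$v$MD, and being also Mori it is a Krull domain. A Krull domain all of whose divisorial ideals are principal has trivial divisor class group, hence is a UFD (one may also quote \cite[Proposition 2.2]{KM2}). I expect the genuinely delicate point to be the core step: it is the combination of strong $w$-stability with the tautology $E(H_0^w)\subseteq R^{[w]}$ that converts a statement about endomorphism rings sitting inside $K$ into the principality of the $t_D$-ideals of $R^{[w]}$, and one must keep careful track of the three operations $w$ on $R$, $\dot w=w_{\vert D}$, and $t_D$, together with the fact that $D$ need not be a fractional overring of $R$ (so the Mori-ness of $D$ is obtained from \cite[Theorem 3.4]{FC}, not directly from Proposition \ref{overnoeth}).
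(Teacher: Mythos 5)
Your proof is correct, but it takes a genuinely different route from the paper's. The paper disposes of the statement in two citations: $R^{[w]}$ is Krull because $R$ is strong Mori \cite[Theorem 3.1]{CZ}, and $R^{[w]}$ is a GCD-domain because $R$ is Boole $w$-regular \cite[Theorem 4.3]{GP2}; a Krull GCD-domain is a UFD. You instead convert Boole $w$-regularity into strong $w$-stability via Corollary \ref{onedim1} and then show by hand that every $t_D$-ideal of $D:=R^{[w]}$ is principal: writing such an ideal as $(H_0D)^{t_D}$ with $H_0$ a finitely generated ideal of $R$ (legitimate once $D$ is known to be Mori), the key observation $E(H_0^w)\subseteq R^{[w]}$ turns $H_0^w=xE(H_0^w)$ into $A=xD$, and both inclusions are handled correctly, the reverse one through $\dot w\le t_D$, which indeed requires $D^w=D$ and is supplied by your directed-union argument (the directedness itself, asserted without proof, follows from $(J_1^w:J_1^w)\cup(J_2^w:J_2^w)\subseteq((J_1J_2)^w:(J_1J_2)^w)$). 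Krull-ness of $D$ then comes from Mori plus P$v$MD rather than being imported from \cite{CZ}, and Mori-ness of $D$ from $\tdim(R)=1$ (Proposition \ref{mori2} and Corollary \ref{corN2}) together with \cite[Theorem 3.4]{FC}, exactly as the Remark following Corollary \ref{quotmori} permits; you are also right that Proposition \ref{overnoeth} cannot be invoked directly, since $R^{[w]}$ need not be a fractional overring. The paper's proof buys brevity; yours buys a nearly self-contained argument that in fact proves more, namely that every $t$-ideal of $R^{[w]}$ is principal (so the Krull domain $R^{[w]}$ has trivial divisor class group), in effect reproving internally the relevant special case of \cite[Theorem 4.3]{GP2} and the consequence of \cite[Theorem 3.1]{CZ} that the paper cites.
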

\begin{proof}  If $R$ is strong Mori then $R^{[w]}$ is a Krull domain  \cite[Theorem 3.1]{CZ}. In addition, if $R$ is Boole $w$-regular, then $R^{[w]}$ is a GCD-domain \cite[Theorem 4.3]{GP2}. Thus $R^{[w]}$ is a UFD.
\end{proof}

\begin{cor} Let $R$ be a Noetherian  domain. If  $R$ is Boole $w$-regular (respectively, regular) then $R^\prime$ is a UFD (respectively, a PID).

\end{cor}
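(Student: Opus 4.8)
The plan is to derive this corollary directly from Proposition \ref{Q2} and the corresponding classical facts about Noetherian domains. First, suppose $R$ is a Noetherian Boole $w$-regular domain. Since a Noetherian domain is in particular a strong Mori domain, Proposition \ref{Q2} applies and gives that $R^{[w]}$ is a UFD. So the only real content is to identify $R^{[w]}$ with the integral closure $R^\prime$ in the Noetherian case. The key observation is that for a Noetherian domain $R$, every finitely generated ideal $J$ satisfies $J^w = J$ up to the behavior at $t$-maximal ideals, but more to the point, $R^\prime$ is itself a Krull domain (Mori--Nagata), hence $(R^\prime)^w = R^\prime$, and one checks $R^{[w]} \subseteq R^\prime$ since $R^\prime \sub R^{[w]} \sub \widetilde R$ always holds while in the Noetherian case $\widetilde R = R^\prime$ (a Noetherian domain is completely integrally closed iff it is integrally closed, and the complete integral closure of a Noetherian domain equals its integral closure). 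Thus $R^{[w]} = R^\prime$, and the UFD conclusion transfers.

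For the ``regular'' (i.e.\ $d$-regular) case, I would argue as follows. A Boole regular Noetherian domain is in particular Boole $w$-regular (since $d \leq w$ implies $d$-regularity gives $w$-regularity), so $R^\prime = R^{[w]}$ is a UFD by the first part. But more is true: since $R$ is Clifford regular, by the result recalled in the introduction $w = d$, so $R$ is Noetherian of $w$-dimension equal to its dimension. Actually the cleanest route is to invoke Corollary \ref{corN2}(1): a $d$-stable Noetherian domain has dimension one, and by Corollary \ref{lemmaN1} a Boole regular (= Boole $d$-regular) Noetherian domain is strongly stable, hence in particular stable, hence one-dimensional. Then $R^\prime$ is a one-dimensional Krull domain that is a UFD, and a one-dimensional UFD is a PID.

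The main obstacle I anticipate is the bookkeeping needed to justify $R^{[w]} = R^\prime$ for Noetherian $R$; one wants to be sure the general containments $R \sub R^\prime \sub R^{[w]} \sub \widetilde R$ collapse, and this rests on the Mori--Nagata theorem (the integral closure of a Noetherian domain, while not necessarily Noetherian, is a Krull domain) together with the fact that a Krull domain is completely integrally closed, forcing $\widetilde R = R^\prime$ and hence $R^{[w]} = R^\prime$. A secondary point is the passage from ``Boole regular'' to ``one-dimensional'': here one uses that Boole regular implies Clifford regular implies (by Corollary \ref{lemmaN1} in the Noetherian case, or directly) stable, and then Corollary \ref{corN2}(1) gives dimension one. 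Once these two reductions are in place, ``one-dimensional Krull UFD $=$ PID'' is standard and needs no further comment.

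\begin{proof}
If $R$ is Noetherian, then $R$ is strong Mori. If moreover $R$ is Boole $w$-regular, then $R^{[w]}$ is a UFD by Proposition \ref{Q2}. For a Noetherian domain $R$ we have $R^\prime \sub R^{[w]} \sub \widetilde{R}$, and by the Mori--Nagata theorem $R^\prime$ is a Krull domain, hence completely integrally closed, so $\widetilde{R} = R^\prime$; therefore $R^{[w]} = R^\prime$ and $R^\prime$ is a UFD.

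If $R$ is Boole regular (i.e., Boole $d$-regular), then, since $d \leq w$, $R$ is Boole $w$-regular, so $R^\prime$ is a UFD by the previous paragraph. Moreover $R$ is Clifford regular, hence by Corollary \ref{lemmaN1} (applied with $\ast = d$) $R$ is stable, so by Corollary \ref{corN2}(1) $R$ has dimension one. Thus $R^\prime$ is a one-dimensional Krull domain which is a UFD, and a one-dimensional UFD is a PID.
\end{proof}
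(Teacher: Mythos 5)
The first half of your argument (the UFD statement) is fine and is essentially what the paper does: the paper's proof simply recalls that $R^\prime=R^{[w]}$ for Noetherian $R$, and your verification of this — $R^\prime\sub R^{[w]}\sub\widetilde{R}$ together with $\widetilde{R}=R^\prime$ in the Noetherian case (either by Mori--Nagata plus the observation that an element almost integral over $R$ is almost integral over $R^\prime$, or more elementarily because almost integral elements over a Noetherian domain are integral) — is exactly the content behind that remark, after which Proposition \ref{Q2} applies since Noetherian implies strong Mori.

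The second half, however, contains a genuine gap: to get dimension one you invoke Corollary \ref{lemmaN1} with $\ast=d$ to conclude that a Clifford regular Noetherian domain is stable, and then feed stability into Corollary \ref{corN2}(1) to obtain dimension one. But Corollary \ref{lemmaN1} is stated only for \emph{one-dimensional} Noetherian domains, so using it here presupposes the very conclusion you are trying to reach; as written the argument is circular, and nothing else in your proof establishes $\dim R=1$. The gap is easy to close with tools the paper actually uses: a Clifford regular domain is finitely stable (Bazzoni, \cite[Proposition 2.3]{B3}, quoted in the proof of Proposition \ref{mori2}), and in a Noetherian domain finitely stable means stable, so Corollary \ref{corN2}(1) then gives $\dim R=1$. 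Alternatively, Clifford regularity forces $d=w$ (as recalled in the paper), so Boole regular means Boole $w$-regular, Corollary \ref{onedim1} gives that $R$ is strongly $w$-stable, i.e.\ strongly stable, and again Corollary \ref{corN2}(1) yields dimension one. With either repair, $R^\prime$ is integral over $R$, hence a one-dimensional Krull UFD, hence a PID, and the rest of your write-up stands.
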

\begin{proof}
 Just recall that, when $R$ is Noetherian, $R^\prime = R^{[w]}$ and that Clifford regularity implies $d=w$.
 \end{proof}

\section{Mori domains}

 The proof of Proposition \ref{lemmaN2bis} cannot be extended to Mori domains, since it is based on the Principal Ideal Theorem. However we now show that ($w$-)stable Mori domains have ($t$-)dimension one  (cf. \cite[Lemma 4.8]{KM1}).

 \begin{prop} \label{dimwstable} A stable (respectively, $w$-stable) Mori domain has dimension (respectively, $t$-dimension) one.
\end{prop}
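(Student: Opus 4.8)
The plan is to reduce to the local case and there replace the use of the Principal Ideal Theorem in Proposition~\ref{lemmaN2bis} by finiteness properties special to Mori domains. Since $d\leq w$, a stable domain is $w$-stable, so by Proposition~\ref{locMori}(1) a $w$-stable Mori domain $R$ has $R_M$ stable for every $M\in\tmax(R)$, and a stable domain $R$ has $R_M$ stable for every maximal ideal $M$; in both cases $R_M$ is a local Mori domain. Hence it is enough to prove that a local stable Mori domain $(R,M)$ has Krull dimension one. Granting this, in the stable case $\dim R=\sup_M\dim R_M=1$, while in the $w$-stable case any chain of $t$-primes of $R$ lies in some $M\in\tmax(R)$ and localizes to a chain of primes of $R_M$, so $\tdim R\leq\sup_{M\in\tmax(R)}\dim R_M=1$; the reverse inequality $\tdim R\geq 1$ holds because a minimal prime over a nonzero principal ideal is a height-one $t$-prime.

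For the local statement I would argue by contradiction: suppose $\dim R\geq 2$ and choose a height-one prime $P$ with $(0)\subsetneq P\subsetneq M$ (possible since $\dim R\geq 2$). Since $R$ is local Mori, $M$ is a $t$-finite $t$-ideal, hence divisorial, and by the classical dichotomy for a divisorial maximal ideal either $M$ is invertible (so $M=tR$ is principal) or $E:=(R:M)=(M:M)$ is a proper overring of $R$; by stability $M$ is invertible in $E(M)=(M:M)$ in all cases. When $M=tR$, every prime $Q\subsetneq M$ satisfies $Q=tQ$, hence $Q\subseteq\bigcap_n t^nR$; one then derives a contradiction from the fact that $\bigcap_n t^nR$ is a divisorial, hence $t$-finite, ideal which, if nonzero, is fixed by $t^{-1}$ and so forces $t^{-1}\in E\lp\bigcap_n t^nR\rp$, against stability together with the Mori hypothesis. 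When $M$ is not principal, one uses that a stable domain is finitely stable, so its integral closure $R'$ is a Pr\"ufer domain (\cite[Proposition~2.1]{Rush}); the chain $P\subsetneq M$ lifts through the integral extension $R\subseteq R'$ to a valuation overring of $R'$ of rank at least two, which is incompatible with the fact that the complete integral closure $\widetilde R\supseteq R'$ of the Mori domain $R$ is a Krull domain, since valuation overrings of a Krull domain have rank at most one.

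The serious point, and the step I expect to be the main obstacle, is precisely this local dimension bound: the Noetherian proof of Proposition~\ref{lemmaN2bis} relies on the Principal Ideal Theorem and on $E(M)$ being module-finite over $R$, and both fail for general Mori domains. What must take their place are the Mori-specific finiteness tools already available in the paper — $t$-finiteness of $M$ together with the description $E(M)=(J^t:J^t)$ for a finitely generated $J$ with $M=J^t$, the fact that a $t$-compatible fractional overring of a Mori domain is again Mori, and the Krull (hence archimedean) nature of $\widetilde R$ — and the argument has to be arranged so that these suffice in place of the Noetherian hypothesis. Keeping track of the distinction between Krull dimension and $t$-dimension throughout the reduction, and of the behaviour of the $t$-operation under localization via Lemma~\ref{lemma3}, is routine but should be done with care.
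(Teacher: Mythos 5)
Your reduction to the $t$-local case is fine and matches the paper, and the case of a principal maximal ideal $M=tR$ can be made to work, although not quite as you state it: the contradiction there is simply the archimedean property of Mori domains (if $t$ is a nonunit and $0\neq a\in\bigcap_n t^nR$, the ascending chain of integral divisorial ideals $at^{-n}R$ violates the ACC and forces $t$ to be a unit), whereas the fact that $t^{-1}\in E\big(\bigcap_n t^nR\big)$ is not by itself in conflict with stability, so your stated mechanism does not yet yield a contradiction. The genuine gap is the case where $M$ is not principal. Your argument there rests on the claim that the complete integral closure $\widetilde R$ of a Mori domain is a Krull domain. This is false in general: Roitman constructed Mori domains whose complete integral closure is not even completely integrally closed; and deducing it here from Proposition \ref{wmori} would be circular, since the proof of that proposition uses precisely Proposition \ref{dimwstable}. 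Moreover, even granting $\widetilde R$ Krull, the assertion that valuation overrings of a Krull domain have rank at most one is also false (any Krull domain of dimension at least two, such as $k[x,y]$, admits rank-two valuation overrings), and in any case the rank-$\geq 2$ valuation overring $R'_{M'}$ you produce is an overring of $R'$ but need not contain $\widetilde R$. So the key local dimension bound, which you correctly identify as the heart of the matter, is not established.

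For comparison, the paper obtains the local bound from a structural input rather than from $\widetilde R$: by \cite[Corollary 1.10]{GP} a ($w$-)stable domain is ($t$-)locally stable, and Olberding's classification \cite{O1, O4} says a local stable domain is either one-dimensional stable, or a strongly discrete valuation domain, or a pullback of a strongly discrete valuation domain $V$ over a local stable ring $D$ of dimension at most one with $P^2=(0)$; since a Mori valuation domain is a DVR and, by \cite[Theorem 9]{M}, the pullback is Mori only when $V$ is a DVR and $D$ is a field, every local Mori stable domain is one-dimensional. Some such structural result (or a genuinely new argument) is needed in place of your appeal to $\widetilde R$ being Krull.
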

\begin{proof} By a corrected version of \cite[Corollary 2.7]{O1}, a local domain $R$ is stable if and only if  one of the following conditions holds:

(a) $R$ is one-dimensional stable;

(b) $R$ is a strongly discrete valuation domain;

(c) $R$ arises from a pullback diagram of type:
$$  \begin{CD}
        R   @>>>    D\\
        @VVV        @VVV    \\
        V  @>\varphi>>   \frac VI\\
        \end{CD}$$
where $V$ is a strongly discrete valuation domain, $I$ is an ideal of $V$, $D$ is a local stable ring of dimension at most one having a prime ideal $P$ such that $P$ contains all the zero-divisors of $D$ and $P^2=(0)$, and $V/I$ is isomorphic to the total quotient ring of $D$ \cite{O4}.

Since a Mori valuation domain is a $DVR$ and in a diagram as in (c)  $R$ is Mori if and only if $V$ is a $DVR$ and $D$ is a field \cite[Theorem 9]{M}, we see that a local Mori stable domain has dimension one.

To conclude, recall that a ($w$-)stable domain is ($t$-)locally stable \cite[Corollary 1.10]{GP}.
\end{proof}

When $\ast$ is the identity, the analog of Proposition \ref{lemmaN1-a} was proved for Mori domains in  \cite{GP2}.

\begin{prop} \label{Moristable}  \cite[Corollary 3.4]{GP2} Let $R$ be a local one-dimensional Mori domain.
The following conditions are equivalent:
\begin{itemize}
\item[(i)] $R$ is Clifford regular;
\item[(ii)] $R$ is (strongly) stable;
\item[(iii)] $R$ is Boole regular.
\end{itemize}
\end{prop}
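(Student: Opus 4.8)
The plan is to establish the three implications (ii)$\,\Rightarrow\,$(i), (iii)$\,\Rightarrow\,$(i), and (i)$\,\Rightarrow\,$``$R$ is strongly stable'': the last of these, via Proposition~\ref{prop1}(3), gives back both (ii) --- including the parenthetical ``(strongly)'' --- and (iii), so that all the stated conditions become equivalent. The first two implications are formal: (ii)$\,\Rightarrow\,$(i) is Proposition~\ref{prop1}(1) read with $\ast=d$, and (iii)$\,\Rightarrow\,$(i) is the elementary fact, already recorded above, that Boole regularity implies Clifford regularity (an idempotent $e$ of a monoid satisfies $e=e\cdot 1\cdot e$). So the whole weight falls on showing that a \emph{Clifford regular} local one-dimensional Mori domain $R$ is \emph{strongly stable}, i.e.\ that every nonzero ideal $I$ is principal in $E(I):=(I:I)$.

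Assume $R$ is Clifford regular. The first ingredient is that $R$ is \emph{finitely stable} --- this is \cite[Proposition~2.3]{B3}, the same input used in the proof of Proposition~\ref{mori2} --- so each finitely generated ideal of $R$ is invertible in its endomorphism ring. Since $R$ is Mori, Lemma~\ref{lemma4} then shows that every $t$-ideal of $R$, equivalently (as $R$ is Mori) every divisorial ideal, is stable. The next task is to promote this from divisorial ideals to arbitrary ideals $I$. Here I would use one-dimensionality and locality: one has $R\subseteq E(I)\subseteq E(I^v)$, where $E(I^v)=(I^v:I^v)$ is itself a divisorial $R$-module (a conductor $(J:I^v)$ with $J=I^v$ divisorial, and conductors into a divisorial ideal are divisorial). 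Using $(I\,E(I^v))^v=I^v$ and rerunning the short computation from the proof of Lemma~\ref{lemma4} with $I^v$ in the role of ``$J^t$'', the stability of $I^v$ in $E(I^v)$ should force $I$ to be invertible in $E(I)$ --- provided one knows that $E(I)=E(I^v)$. Verifying this equality, or circumventing it, in the one-dimensional local Mori setting is the first point that needs genuine care.

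It then remains to strengthen ``$I$ invertible in $E(I)$'' to ``$I$ principal in $E(I)$''. This is the precise analogue of the step (iii)$\,\Rightarrow\,$(iv) in Proposition~\ref{lemmaN1-a}: there one uses that $E(I)$ is a finite $R$-module, hence semilocal by the Bourbaki argument, hence a domain whose invertible ideals are principal. Under the Mori hypothesis alone $E(I)$ need not be a finite $R$-module, and $\widetilde R$ need not even be a fractional overring, so the semilocality of $E(I)$ has to be squeezed out of Clifford regularity itself --- in practice, out of finite stability together with a structural description of one-dimensional local Clifford regular Mori domains, in the spirit of Olberding's trichotomy used in Proposition~\ref{dimwstable} and of Bazzoni's classification. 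Once $E(I)$ is known to be semilocal one is done, since $E(I)\subseteq\widetilde R$ and $\widetilde R$ is a one-dimensional Krull, hence Dedekind, domain, so the invertible ideal $I$ of the semilocal overring $E(I)$ is principal. I expect the main obstacle to be exactly this: pinning down the finiteness (semilocality) of $E(I)$ --- and with it the divisorial-to-arbitrary reduction of the previous paragraph --- in the absence of Noetherianity.
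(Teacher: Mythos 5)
The paper itself offers no proof of this proposition: it is imported verbatim from \cite[Corollary 3.4]{GP2}, so there is no internal argument to compare yours with and the proposal must stand on its own. The formal reductions you record are fine ((ii)$\Rightarrow$(i) via Proposition \ref{prop1}(1) with $\ast=d$, (iii)$\Rightarrow$(i) trivially, and ``strongly stable'' giving back (ii) and (iii) via Proposition \ref{prop1}(3)), and the opening of the main implication --- Clifford regular $\Rightarrow$ finitely stable by \cite[Proposition 2.3]{B3}, then every $t$-ideal ($=$ divisorial ideal, since $t=v$ on a Mori domain) is stable by Lemma \ref{lemma4} --- is the right kind of input. But the two steps you yourself flag as needing ``genuine care'' are genuine gaps, not technicalities. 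Passing from divisorial ideals to arbitrary ideals through the equality $E(I)=E(I^v)$ is not available: in general one only has $E(I)\subseteq E(I^v)$, and invertibility of $I^v$ in the larger ring $E(I^v)$ does not descend to invertibility of $I$ in $E(I)$; no argument is offered for the equality or for a way around it, and this is precisely where regularity of the ideal $I$ itself (not merely of its divisorial closure) has to be exploited.

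The promotion from ``invertible in $E(I)$'' to ``principal in $E(I)$'' rests on two further unproved claims: that $E(I)$ is semilocal, and that $\widetilde R$ is a one-dimensional Krull (hence Dedekind) domain. The Bourbaki finiteness argument of Proposition \ref{lemmaN1-a}, (iii)$\Rightarrow$(iv), is unavailable because $E(I)$ need not be a finite $R$-module outside the Noetherian case, as you note, and no substitute is supplied. Worse, the complete integral closure of a Mori domain need not be Krull (nor even completely integrally closed) in general; within this paper the identification of $R^{[w]}=\widetilde R$ with a Krull domain under regularity hypotheses is obtained only in Proposition \ref{wmori}, whose proof of (i)$\Rightarrow$(iii) invokes the very Proposition \ref{Moristable} you are trying to prove, so appealing to that fact here would be circular. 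As it stands, the core implication (i)$\Rightarrow$(ii)/(iii) is a program with the hard points correctly identified but not solved; closing it requires substantial additional structure theory of one-dimensional local Mori finitely stable domains, which is what the cited source \cite[Corollary 3.4]{GP2} actually provides.
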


 In \cite[Theorem 4.8]{GP2} we showed that in $t$-dimension one $w$-regularity and $w$-stability are equivalent on a domain $R$ if and only if the $w$-integral closure $R^{[w]}$ is a Krull domain. For Mori domains we have the following result.

\begin{prop} \label{wmori} Let $R$ be a Mori domain. The following conditions are equivalent:
  \begin{itemize}
  \item [(i)] $R$ is Clifford $w$-regular of $t$-dimension one (respectively, regular of dimension one);
   \item [(ii)] $R$ is Clifford $w$-regular and $R^{[w]}$ is a Krull domain (respectively, regular and $R^\prime$ is a Dedekind domain);
  \item[(iii)] $R$ is $w$-stable (respectively, stable).
\end{itemize}
Under (anyone of) these conditions $R^{[w]}= \widetilde{R}$ (respectively, $R^\prime=\widetilde{R}$) is the complete integral closure of $R$.
\end{prop}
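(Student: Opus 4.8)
The plan is to establish the cycle of implications (ii) $\Rightarrow$ (i) $\Rightarrow$ (iii) $\Rightarrow$ (ii), and then to read off the assertion on the complete integral closure from condition (ii). I would prove the $w$-version in detail and obtain the parenthetical ($d$-)version by specialization: a Clifford regular domain (as recalled in the Introduction), and in particular a stable domain (by \cite[Corollary 1.6]{GP}), has $w=d$, so that there ``regular'' coincides with ``Clifford $w$-regular'', ``stable'' with ``$w$-stable'', and $R^{\prime}=R^{[d]}=R^{[w]}$. The only extra bookkeeping is that each of the three conditions forces $\dim R=1$ (for (iii) by Proposition \ref{dimwstable}; for (ii) because a Dedekind domain has Krull dimension one and $R\subseteq R^{\prime}$ is an integral extension; for (i) by hypothesis), which reconciles ``$\dim R=1$'' with ``$\tdim R=1$'' and ``$R^{\prime}$ Dedekind'' with ``$R^{[w]}$ Krull''.

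For (ii) $\Rightarrow$ (i): a Krull domain has $t$-dimension one, so $R^{[w]}$ has $t$-dimension one; since $w=\widetilde{w}$, the dimension formula \cite[Corollary 4.2]{GP2} transfers this to $R$, yielding $\tdim R=1$, which together with Clifford $w$-regularity is (i). (This is a delicate point: one must know that the dimension of the Krull domain $R^{[w]}$ that \cite[Corollary 4.2]{GP2} computes is really $1$, which I would verify using that on a Krull domain the $w$-, $t$- and $v$-operations coincide.) For (iii) $\Rightarrow$ (ii): if $R$ is $w$-stable then $R$ is Clifford $w$-regular by Proposition \ref{prop1}(1) and $\tdim R=1$ by Proposition \ref{dimwstable}, so \cite[Theorem 4.8]{GP2} applies and gives that $R^{[w]}$ is Krull; together with Clifford $w$-regularity this is (ii).

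The heart of the argument is (i) $\Rightarrow$ (iii), which I would prove by reduction to the local case. Fix $M\in\tmax(R)$. By Proposition \ref{locMori}(2) the ring $R_{M}$ is Clifford regular; being a localization of a Mori domain it is again Mori (Corollary \ref{quotmori}); and it is one-dimensional, since if $\operatorname{ht} M\ge 2$ a height-one prime of $R$ contained in $M$ would be a $t$-prime and would give, together with the $t$-prime $M$, a chain of $t$-primes of length two, contradicting $\tdim R=1$. Thus $R_{M}$ is a local one-dimensional Mori Clifford regular domain, hence (strongly) stable by Proposition \ref{Moristable}. As this holds for every $M\in\tmax(R)$, Proposition \ref{locMori}(1) gives that $R$ is $w$-stable.

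Finally, assuming (ii): the Krull domain $R^{[w]}$ is completely integrally closed, so $\widetilde{R^{[w]}}=R^{[w]}$; since $R\subseteq R^{[w]}$ implies $\widetilde{R}\subseteq\widetilde{R^{[w]}}$, we get $\widetilde{R}\subseteq R^{[w]}$, and as $R^{[w]}\subseteq\widetilde{R}$ always, $R^{[w]}=\widetilde{R}$ (and $R^{\prime}=\widetilde{R}$ in the $d$-case). I expect the two spots needing genuine care to be the one-dimensionality of the localizations $R_{M}$ in (i) $\Rightarrow$ (iii), so that Proposition \ref{Moristable} is applicable, and the transfer of $t$-dimension from $R^{[w]}$ to $R$ in (ii) $\Rightarrow$ (i); the remaining steps are bookkeeping with the $t$-local characterizations of Proposition \ref{locMori} and the quoted structural results.
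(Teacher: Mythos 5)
Your cycle (ii) $\Rightarrow$ (i) $\Rightarrow$ (iii) $\Rightarrow$ (ii) is the same cycle the paper uses, and your (i) $\Rightarrow$ (iii) is identical to the paper's (localize at $M\in\tmax(R)$, apply Proposition \ref{locMori}(2), Proposition \ref{Moristable}, then Proposition \ref{locMori}(1)); your (iii) $\Rightarrow$ (ii) takes a legitimate shortcut through \cite[Theorem 4.8]{GP2} together with Proposition \ref{dimwstable}, where the paper instead shows directly that $R^{[w]}$ is a $w$-stable P$v$MD with $t$-finite character and no $t$-idempotent $t$-primes (\cite[Corollary 2.6]{GP}, \cite[Theorem 2.9]{GP}) of $t$-dimension one, hence Krull. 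Your handling of the $d$-case (via $d=w$ and the integrality of $R\subseteq R^\prime$ to reconcile Krull with Dedekind and $\tdim$ with $\dim$) and of the final claim $R^{[w]}=\widetilde{R}$ agrees with the paper.

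However, your (ii) $\Rightarrow$ (i) has a genuine gap at exactly the point you flag as delicate, and your proposed fix does not close it. \cite[Corollary 4.2]{GP2} identifies the $w$-dimension of $R$ with the $\dot{w}$-dimension of $D:=R^{[w]}$, where $\dot{w}$ is the \emph{restriction} of the $w$-operation of $R$ to $D$-submodules, not with the intrinsic $w_D$- or $t_D$-dimension of $D$. Knowing that the Krull domain $D$ has $t_D$-dimension one does not bound the $\dot{w}$-dimension from above: since $\dot{w}$ is a finite-type (semi)star operation on $D$, one only gets $\dot{w}\leq t_D$, which makes every $t_D$-prime a $\dot{w}$-prime and hence gives $\dot{w}$-$\dim(D)\geq t_D$-$\dim(D)$ --- the wrong direction; a priori $D$ could have $\dot{w}$-primes of height at least two (as it would, for instance, if $\dot{w}$ were the identity on $D$). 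Your remark that $w=t=v$ on a Krull domain concerns only the intrinsic operations of $D$ and does not identify them with $\dot{w}$. The missing ingredient is precisely where the Clifford $w$-regularity hypothesis in (ii) does real work: by \cite[Theorem 4.3]{GP2}, $w$-regularity yields $\dot{w}=w_D$ on $D$, and only then does \cite[Corollary 4.2]{GP2} give $w$-$\dim(R)=w_D$-$\dim(D)=1$, i.e.\ $\tdim(R)=1$. With that citation (or an equivalent argument that every $\dot{w}$-prime of $R^{[w]}$ is a $t$-prime) inserted, your proof is complete.
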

\begin{proof} Since Clifford regularity and stability imply $d=w$, it is enough to prove the theorem for the $w$-operation.

(i) $\ra$ (iii) For each $M\in \tmax(R)$, $R_M$ is one-dimensional Mori and Clifford regular by Proposition \ref{locMori}(2). Hence $R_M$ is stable by Proposition \ref{Moristable}. We conclude by applying Proposition \ref{locMori}(1).

(iii) $\ra$ (ii) If $R$ is $w$-stable, $D:=R^{[w]}$ is a $w_{\vert_D}$-stable overring of $R$ \cite[Corollary 2.6]{GP}, hence $D$ is a  P$v$MD with $t$-finite character such that $(P^2)^t\neq P$, for all $P\in \tspec(R)$ \cite[Theorem 2.9]{GP}. Since $R$ has $w$-dimension one (Proposition \ref{dimwstable}), $D$ has $\dot{w}$-dimension one (see for example \cite[Corollary 4.2]{GP2}) and since $\dot{w}\leq t_D$ it has also $t_D$-dimension one. It follows that $D:=R^{[w]}$ is a Krull domain.

(ii) $\ra$ (i) If $D = R^{[w]}$ is Krull, it has $w_D$-dimension one. Now $R$ is Clifford $w$-regular, so that $\dot{w} = w_D$ \cite[Theorem 4.3]{GP2}. Hence we conclude that $R$ has $w$-dimension one (and so, $t$-dimension one) by \cite[Corollary 4.2]{GP2}.

\smallskip
Since a Krull domain is completely integrally closed, if  $R^{[w]}$ is  Krull, we have $\widetilde{R^{[w]}}=R^{[w]}$. Hence, from $R\sub R^{[w]}\sub\widetilde{R}$, we obtain $\widetilde{R}\sub \widetilde{R^{[w]}}= R^{[w]}\sub \widetilde{R}$ and $R^{[w]}= \widetilde{R}$.
\end{proof}

  If $R$ is strong Mori then $R^{[w]}$ is a Krull domain \cite[Theorem 3.1]{CZ}, hence from Theorem \ref{wmori} we obtain again that $w$-stability and $w$-regularity coincide on strong Mori domains, as seen in Corollary \ref{onedim1}.

Proposition \ref{wmori} improves  \cite[Theorem 4.7]{KM1}.  In relation to this result, Kabbaj and Mimouni ask whether  a local Mori Clifford regular domain is one-dimensional if and only if its complete integral closure $\widetilde{R}$ is Dedekind  \cite[page 633]{KM1}. We can give the following answer.

\begin{cor} \label{Q1} Let $R$ be a Mori Clifford $w$-regular (respectively, regular) domain. The following conditions are equivalent:
  \begin{itemize}
  \item [(i)] $R$ has $t$-dimension one (respectively, dimension  one);
   \item [(ii)]  $R^{[w]}$  is a Krull domain  (respectively, $R^\prime$ is a Dedekind domain).
   \end{itemize}
Under (anyone of) these conditions, $R$ is $w$-stable and $R^{[w]}= \widetilde{R}$ (respectively, $R^\prime=\widetilde{R}$) is the complete integral closure of $R$.
\end{cor}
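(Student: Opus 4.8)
The plan is to read the statement off Proposition~\ref{wmori}. Since $R$ is assumed to be a Mori domain that is Clifford $w$-regular (respectively, Clifford regular, in which case $d=w$ as recalled in Section~1, so the two formulations coincide), condition~(i) of the corollary is exactly condition~(i) of Proposition~\ref{wmori} and condition~(ii) of the corollary is exactly condition~(ii) of that proposition, with the Clifford $w$-regularity (resp.\ regularity) hypothesis now moved into the standing assumptions. Hence the equivalence (i)~$\Leftrightarrow$~(ii) is nothing but the equivalence (i)~$\Leftrightarrow$~(ii) of Proposition~\ref{wmori}.

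For the final assertion, I would invoke that, by Proposition~\ref{wmori}, each of (i) and (ii) is in turn equivalent to condition~(iii) there, namely that $R$ is $w$-stable (respectively, stable); this gives the ``$R$ is $w$-stable'' conclusion. Likewise the closing sentence of Proposition~\ref{wmori} yields $R^{[w]}=\widetilde{R}$ (respectively, $R^\prime=\widetilde{R}$) under any of these conditions. So the whole proof is essentially the single line ``Apply Proposition~\ref{wmori}'', possibly preceded by the remark that for Clifford regular domains $d=w$, which reduces the ``regular'' case to the ``$w$-regular'' case exactly as in Proposition~\ref{wmori}.

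Accordingly there is no real obstacle. If one wanted a self-contained argument it would simply retrace Proposition~\ref{wmori}: for (i)~$\Rightarrow$~(ii) one localizes at the $t$-maximal ideals, uses Proposition~\ref{locMori}(2) and Proposition~\ref{Moristable} to get local stability, globalizes via Proposition~\ref{locMori}(1) to get $w$-stability, and then uses that a $w$-stable overring of finite $\dot w$-dimension one is Krull to conclude that $R^{[w]}$ is Krull; for (ii)~$\Rightarrow$~(i) one uses that a Krull domain has $w$-dimension one together with $\dot w = w_{R^{[w]}}$ (valid since $R$ is Clifford $w$-regular) and \cite[Corollary~4.2]{GP2} to pull the $w$-dimension back down to $R$. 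The only genuinely nontrivial ingredient in this chain is the structure theory of $w$-stable domains from \cite{GP}, and it is already available, so nothing new has to be proved here.
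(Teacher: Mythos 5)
Your proposal is correct and matches the paper exactly: the paper gives no separate argument for Corollary \ref{Q1}, treating it as an immediate consequence of Proposition \ref{wmori}, which is precisely your reduction (with the regular case handled via $d=w$). Nothing further is needed.
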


\begin{prop} Let $R$ be a Mori integrally closed domain. Then the following conditions are equivalent:
  \begin{itemize}
  \item [(i)] $R$ is Clifford $w$-regular;
   \item [(ii)] $R$ is a Krull domain;
  \item[(iii)] $R$ is $w$-stable.
\end{itemize}
\end{prop}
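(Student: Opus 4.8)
The strategy is to use the chain of implications already assembled in the paper together with the characterization of $w$-regularity in $t$-dimension one. First I would observe that, since $R$ is integrally closed, $R = R' = R^{[w]}$ (an integrally closed domain coincides with its integral closure, and for a Mori domain the $w$-integral closure sits between $R'$ and $\widetilde R$, but here we only need $R \subseteq R^{[w]} \subseteq \widetilde R$ together with integral closedness to pin down $R^{[w]}=R$ once we know $R$ is completely integrally closed; alternatively argue directly). The implication (ii) $\Rightarrow$ (iii): a Krull domain is a P$v$MD in which $t=w$, and in a Krull domain every nonzero ideal is $t$-invertible in $R$ itself, hence $w$-invertible, so $R$ is $w$-stable (indeed every $w$-ideal $I$ satisfies $E(I^w)=R$ since $R$ is completely integrally closed, and $I^w$ is $w$-invertible in $R$). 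The implication (iii) $\Rightarrow$ (i) is immediate from Proposition \ref{prop1}(1), which gives that $w$-stability implies Clifford $w$-regularity.

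The substantive implication is (i) $\Rightarrow$ (ii). Here I would invoke Proposition \ref{wmori}: if $R$ is Clifford $w$-regular and has $t$-dimension one, then $R^{[w]}$ is Krull. So it suffices to show that a Mori integrally closed Clifford $w$-regular domain has $t$-dimension one. Since $R$ is integrally closed and Mori, one can appeal to the fact that $R^{[w]}$ is an integrally closed overring containing $R$ and contained in $\widetilde R$; actually the cleanest route is to show $R^{[w]} = R$ when $R$ is integrally closed and then handle the dimension. For the latter: by Proposition \ref{locMori}(2), $R$ Clifford $w$-regular implies $R_M$ is Clifford regular for each $M \in \tmax(R)$; each $R_M$ is a one-dimensional Mori domain or we must rule out higher dimension. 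A localization $R_M$ of an integrally closed Mori domain at a $t$-maximal ideal is itself integrally closed and Mori (localization preserves integral closedness; Mori-ness of $R_M$ follows since $R_M$ is a ring of fractions of a Mori domain), and a local integrally closed Mori Clifford regular domain must be a DVR — by Proposition \ref{Moristable} it is stable, and an integrally closed stable domain is Pr\"ufer (\cite[Proposition 2.1]{Rush}), hence a valuation domain, and a Mori valuation domain is a DVR. Thus each $R_M$ has dimension one, giving $\tdim(R) = 1$.

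Having established (i) $\Rightarrow$ that each $R_M$ is a DVR, we get that $R = \bigcap_{M \in \tmax(R)} R_M$ is a Krull domain directly (an integrally closed Mori domain all of whose $t$-localizations are DVRs, with the $t$-finite character that Mori domains enjoy, is Krull), which simultaneously yields (ii). The main obstacle is the passage from "Clifford $w$-regular Mori integrally closed" to "$t$-dimension one": one must be careful that localizing at a $t$-maximal ideal preserves the Mori property (it does, as a ring of fractions of a Mori domain — this is used already in Lemma \ref{lemma3} and Proposition \ref{locMori}) and preserves integral closedness, and then the local structure theory (Proposition \ref{Moristable} plus the Rush result on integrally closed stable domains plus "Mori valuation domain = DVR") does the rest. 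Once the local picture is DVRs everywhere, the global conclusion that $R$ is Krull, and hence $w$-stable and equal to $\widetilde R$, follows from the standard characterization of Krull domains together with the $t$-finite character of Mori domains.
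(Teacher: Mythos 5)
Your implications (ii) $\Rightarrow$ (iii) and (iii) $\Rightarrow$ (i) are fine and agree with the paper's proof. The problem is in (i) $\Rightarrow$ (ii), which is the substantive implication: after localizing and obtaining that $R_M$ is Clifford regular for each $M\in\tmax(R)$ via Proposition \ref{locMori}(2), you invoke Proposition \ref{Moristable} to conclude that $R_M$ is stable. But Proposition \ref{Moristable} is stated only for \emph{local one-dimensional} Mori domains, and the one-dimensionality of $R_M$ is precisely what you are trying to establish at that point; as written the step is circular, and it is the crux of your whole argument, since once each $R_M$ is known to be a DVR the global conclusion (Krullness of $R=\bigcap_{M\in\tmax(R)}R_M$, using the $t$-finite character of Mori domains) is routine. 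A secondary, smaller issue: your opening remark that integral closedness ``pins down'' $R^{[w]}=R$ is not justified in general --- one only has $R'\subseteq R^{[w]}\subseteq\widetilde R$, and the first inclusion can be strict for integrally closed non-completely-integrally-closed domains (e.g.\ pseudo-valuation domains as in Section 2) --- but your main line of argument does not actually use it.

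The gap is repairable, though with different tools than the one you cite: Clifford regularity of $R_M$ gives finite stability with no dimension hypothesis (\cite[Proposition 2.3]{B3}, exactly as used in the proof of Proposition \ref{mori2}), an integrally closed finitely stable domain is Pr\"ufer, hence the local integrally closed Mori domain $R_M$ is a valuation Mori domain, i.e.\ a DVR; then your global argument goes through and even bypasses Proposition \ref{wmori}. The paper itself takes a shorter, non-local route for (i) $\Rightarrow$ (ii): it quotes \cite[Corollary 4.5]{GP2}, namely that an integrally closed Clifford $w$-regular domain is a P$v$MD, and a Mori P$v$MD is Krull; (ii) $\Rightarrow$ (iii) and (iii) $\Rightarrow$ (i) are then exactly as you argue. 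So your strategy is viable after replacing the appeal to Proposition \ref{Moristable} by the finite-stability argument, but in its present form the key step does not stand.
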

\begin{proof}  (i) $\ra$ (ii)  Since an integrally closed Clifford  $w$-regular domain is a P$v$MD \cite[Corollary 4.5]{GP2}, $R$ is Krull.

(ii) $\ra$ (iii) because in a Krull domain each $t$-ideal is $t$-invertible and $t=w$.

(iii) $\ra$ (i) is clear.
\end{proof}

If $R$ is Mori and \emph{$w$-divisorial}, that is $w=t=v$ (as star operations),  $R$ is  a strong Mori domain of $t$-dimension one \cite[Corollary 4.3]{GE}. If, in addition, $R$ is also $w$-stable, each domain $D$ $t$-linked over $R$ is $w_D$-divisorial \cite[Corollary 3.6]{GP}. Hence from Corollaries \ref{onedim2} and  \ref{onedim1}, we obtain the following result.

\begin{prop} Let $R$ be a $w$-divisorial Mori domain. The following statements are equivalent:
\begin{itemize}
  \item [(i)] $R$ is Clifford $w$-regular;
  \item [(ii)] $R$ is Clifford $t$-regular;
  \item [(iii)] $R$ is $w$-stable;
   \item [(iv)] $R$ is $t$-stable.
    \end{itemize}
In addition, all these properties are inherited by each $t$-linked overring of $R$.
\end{prop}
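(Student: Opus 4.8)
The plan is to deduce this proposition almost entirely from results already established in the excerpt, using the hypothesis that $R$ is a $w$-divisorial Mori domain to bridge between the $w$- and $t$-operations. First I would recall that $w$-divisorial means $w=t=v$ as star operations, and that by \cite[Corollary 4.3]{GE} such an $R$ is automatically a strong Mori domain of $t$-dimension one. Since $w=t$ on all ideals, the notions ``Clifford $w$-regular'' and ``Clifford $t$-regular'' literally coincide, and likewise ``$w$-stable'' and ``$t$-stable'' coincide; so (i) $\lra$ (ii) and (iii) $\lra$ (iv) are immediate. The remaining content is the equivalence of regularity and stability, which for a strong Mori domain is exactly Corollary \ref{onedim1} (giving (i) $\lra$ (iii)): a Clifford $w$-regular strong Mori domain is $t$-stable by Proposition \ref{mori2}, hence of $t$-dimension one by Corollary \ref{corN2}, and then Proposition \ref{lemmaN1bis} applied with $\ast_1=\ast_2=w$ closes the loop. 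Alternatively, since $t$-dimension is already one, Corollary \ref{onedim2} gives the $t$-operation version directly.

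For the final assertion, that all four properties pass to every $t$-linked overring $D$ of $R$, the key input is \cite[Corollary 3.6]{GP}: if $R$ is $w$-divisorial Mori and $w$-stable, then each $t$-linked overring $D$ is $w_D$-divisorial. So assuming any one of (i)--(iv) holds, $R$ is $w$-stable, hence $D$ is $w_D$-divisorial; moreover $D$ is $t$-linked over the Mori domain $R$ and hence is itself Mori (a $t$-linked overring of a Mori domain of $t$-dimension one is Mori, by \cite[Proposition 2.20]{DHLZ} as noted in the Remark after Corollary \ref{quotmori}), and in fact strong Mori by \cite[Corollary 4.3]{GE} since it is $w_D$-divisorial. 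Then $D$ inherits $w_D$-stability: a $w$-stable domain has all its $t$-linked overrings $\dot{w}$-stable by \cite[Corollary 2.6]{GP}, and since $D$ is $w_D$-divisorial we have $\dot{w}=w_D$, so $D$ is $w_D$-stable. Applying the equivalences (i)--(iv) — already proved — to $D$ in place of $R$ then yields that $D$ has all four properties.

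The one step requiring a little care is making sure the semistar restriction $\dot{w}$ on an overring $D$ agrees with the intrinsic $w_D$; this is where $t$-linkedness (equivalently $w$-compatibility, by \cite[Proposition 3.10]{Jesse}) and the $w_D$-divisoriality of $D$ are used together, and it is exactly the mechanism already exploited in the proof of Proposition \ref{wmori}. I do not expect any genuine obstacle: the proposition is essentially a bookkeeping corollary packaging Corollaries \ref{onedim2} and \ref{onedim1} together with \cite[Corollary 3.6]{GP} and \cite[Corollary 4.3]{GE}, and the proof should be only a few lines. If anything, the subtlety is purely in citing the right combination of earlier results rather than in any new argument.

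\begin{proof}
Since $R$ is $w$-divisorial, $w=t=v$ as star operations on $R$; hence (i) $\lra$ (ii) and (iii) $\lra$ (iv) trivially. By \cite[Corollary 4.3]{GE}, $R$ is a strong Mori domain of $t$-dimension one. Thus (i) $\lra$ (iii) follows from Corollary \ref{onedim1} (or from Corollary \ref{onedim2}, since $\tdim(R)=1$).

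Assume one of (i)--(iv) holds; then $R$ is $w$-stable. By \cite[Corollary 3.6]{GP}, every $t$-linked overring $D$ of $R$ is $w_D$-divisorial, and being $t$-linked over a Mori domain of $t$-dimension one it is itself Mori (\cite[Proposition 2.20]{DHLZ}), in fact strong Mori by \cite[Corollary 4.3]{GE}. Moreover, $D$ is a $\dot{w}$-stable overring of $R$ by \cite[Corollary 2.6]{GP}; since $D$ is $w_D$-divisorial, the semistar operation $\dot{w}$ restricts to $w_D$ on $D$, so $D$ is $w_D$-stable. Applying the equivalence of (i)--(iv) to $D$, we conclude that $D$ is Clifford $w_D$-regular, Clifford $t_D$-regular, $w_D$-stable and $t_D$-stable.
\end{proof}
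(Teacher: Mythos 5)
Your proof is correct and follows essentially the same route as the paper, whose argument is exactly the paragraph preceding the proposition: $w=t=v$ collapses (i)$\lra$(ii) and (iii)$\lra$(iv), \cite[Corollary 4.3]{GE} makes $R$ strong Mori of $t$-dimension one so Corollaries \ref{onedim1} and \ref{onedim2} give regularity $\lra$ stability, and \cite[Corollary 3.6]{GP} together with the transfer of stability handles $t$-linked overrings. One small correction: the fact that a $t$-linked overring of $R$ is Mori should be attributed to \cite[Theorem 3.4]{FC} (the converse for strong Mori domains of $t$-dimension one), not to \cite[Proposition 2.20]{DHLZ}, which states the opposite implication; also, you do not need the equality $\dot{w}=w_D$ you assert, since $\dot{w}\leq t_D=w_D$ already suffices to pass from $\dot{w}$-stability to $w_D$-stability.
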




\begin{thebibliography}{1}

\bibitem{Bar} V. Barucci, \emph{Mori domains}, Non-Noetherian Commutative Ring Theory; Recent Advances, Chapter 3, Kluwer Academic Publishers, 2000.

\bibitem{B1} S. Bazzoni, \emph{Class semigroups of Pr\"ufer domains}, J. Algebra {\bf 184} (1996), 613--631.

\bibitem{B2} S. Bazzoni, \emph{Groups in  the class semigroups of Pr\"ufer domains of finite character}, Comm. Algebra {\bf 28} (2000), 5157--5167.

\bibitem{B3} S. Bazzoni, \emph{Clifford regular domains}, J. Algebra {\bf  238} (2001), 703--722.

\bibitem{B4} S. Bazzoni, \emph{Finite character of finitely stable domains}, J. Pure Appl. Algebra, to appear.

\bibitem{BS} S. Bazzoni and L. Salce, \emph{Groups in  the class semigroups of valuation domains}, Israel J. Math. {\bf 95} (1996), 135--155.

\bibitem{Bou} N. Bourbaki, Alg\`ebre Commutative, Hermann, Paris, 1964.

\bibitem{CZ} G. W. Chang and M. Zafrullah, \emph{The $w$-integral closure of integral domains}, J. Algebra {\bf 295} (2006), 195--210.

\bibitem{DHLZ} D. E. Dobbs, E. G. Houston, T. G. Lucas and M. Zafrullah,
\emph{$t$-linked overrings and Pr\"ufer $v$-multiplication
domains}, Comm. Algebra, \textbf{17} (1989), 2835--2852.

\bibitem{EFP}S. El Baghdadi, M. Fontana and G. Picozza, {\it Semistar Dedekind
domains}, J. Pure Applied Algebra {\bf 193} (2004), 27--60.

\bibitem{GE}S. El Baghdadi and S. Gabelli, \emph{$w$-divisorial domains}, J. Algebra {\bf 285} (2005), 335--355.

\bibitem{Jesse} J. Elliot, \emph{Functorial properties of star operations}, Comm. Algebra {\bf 38} (2010), 1466--1490.

\bibitem{fl01}
M. Fontana and K.~A. Loper, \emph{Kronecker function rings: a
general approach}, Ideal theoretic methods in commutative algebra
(Columbia, MO, 1999), Lecture Notes in Pure and Appl. Math., vol.
220, Dekker, New York, 2001, pp.~189--205.

\bibitem{GH} S. Gabelli and  E. Houston, \emph{Ideal Theory in Pullbacks}, Non-Noetherian Commutative Ring Theory; Recent Advances, Chapter 9, Kluwer Academic Publishers, 2000.

\bibitem{GP} S. Gabelli and G. Picozza, {\it Star-stable
domains}, J. Pure Applied Algebra {\bf 208} (2007), 853--866.

\bibitem{GP2} S. Gabelli and G. Picozza, {\it Stability and regularity with respect to star operations}, submitted (arXiv:1007.5265).

\bibitem{g1} R. Gilmer, Multiplicative ideal theory. Pure and Applied Mathematics, No. 12. Marcel Dekker, Inc., New York, 1972.

\bibitem{HK} F. Halter-Koch, \emph{Clifford semigroups of ideals in monoids and domains}, Forum Math. {\bf 21} (2009), 1001--1020.

\bibitem{HH} J.R. Hedstrom and E.G. Houston, \emph{Pseudo-valuation domains. II.} Pacific J. Math. \textbf{75} (1978), no. 1, 137--147.

\bibitem{KM1} S. Kabbaj and A. Mimouni, \emph{Class semigroups of integral domains}, J. Algebra {\bf 264} (2003), 620--640.

\bibitem{KM2} S. Kabbaj and A. Mimouni, \emph{$t$-Class semigroups of integral domains}, J. reine angew. Math. {\bf 612} (2007), 213--229.

\bibitem{KM3} S. Kabbaj and A. Mimouni, \emph{Constituent groups  of Clifford semigroups arising from $t$-closure}, J. Algebra {\bf 321} (2009), 1443--1452.

\bibitem{KM4} S. Kabbaj and A. Mimouni, \emph{$t$-Class semigroups of Noetherian domains}, Commutative Algebra and its applications, de Gruyter (2009), 283--290.

\bibitem{K} B.G. Kang, \emph{Pr\"ufer $v$-multiplication domains
and the ring $R[X]_{N_v}$}, J. Algebra \textbf{123} (1989),
151-170.

 \bibitem{M} A. Mimouni, \emph{Pullbacks and coherent-like properties},  Advances in commutative ring theory (Fez, 1997),  437--459, Lecture Notes in Pure and Appl. Math., 205, Dekker, New York, 1999.

\bibitem{O3} B. Olberding,  \emph{Globalizing local properties of Pr\"ufer domains}, J. Algebra \textbf{205} (1998), no.~2, 480--504.

\bibitem{O5} B. Olberding, \emph{Stability, duality and $2$-generated ideals, and a canonical decomposition of modules}, Rend. Sem. Mat. Univ. Padova {\bf 106} (2001), 261--290.

 \bibitem{O1} B. Olberding, \emph{On the classification of stable domains},
J. Algebra \textbf{243} (2001), no.~1, 177--197.

\bibitem{O2} B. Olberding, \emph{On the structure of stable domains}, Comm.
Algebra \textbf{30} (2002), no.~2, 877--895.

\bibitem{O4} B. Olberding, \emph{An exceptional class of stable domains}, Communication at AMS Meeting $\sharp$991, Special Session on Commutative Rings and Monoids, Chapel Hill, NC, October 2003.

\bibitem{giampa} G. Picozza, \emph{Star operations on overrings and semistar operations}, Comm. Algebra {\bf 33} (2005), 2051--2073.

\bibitem{Rush} D. Rush, \emph{Two-generated ideals and representations of abelian groups over valuation rings}. J. Algebra  {\bf 177}  (1995), no. 1, 77--101.

\bibitem{FC} Wang Fanggui and R.L. McCasland, {\it On strong Mori domains}, J. Pure Appl. Algebra {\bf 135} (1999), 155-165.

\bibitem{ZZ} P. Zanardo and U. Zannier, \emph{The class semigroup of orders in number fields}, Math. Proc. Cambridge Philos. Soc. {\bf 115} (1994), 379--391.

\end{thebibliography}
\end{document}